\algnewcommand{\Inputs}[1]{%
  \State \textbf{Inputs:}
  \Statex \hspace*{\algorithmicindent}\parbox[t]{.8\linewidth}{\raggedright #1}
}
\algnewcommand{\Initialize}[1]{%
  \State \textbf{Initialization:}
  \Statex \hspace*{\algorithmicindent}\parbox[t]{.8\linewidth}{\raggedright #1}
}
\DeclareMathOperator{\st}{s.t.}
\DeclareMathOperator{\tr}{tr}
\algnewcommand{\algorithmicand}{\textbf{ and }}
\algnewcommand{\algorithmicor}{\textbf{ or }}
\algnewcommand{\OR}{\algorithmicor}
\algnewcommand{\AND}{\algorithmicand}
\algnewcommand{\var}{\texttt}
\begin{document}

\title{Conditions for linear convergence of the gradient method for non-convex optimization\thanks{This work was supported by the
 Dutch Scientific Council (NWO)  grant OCENW.GROOT.2019.015, \emph{Optimization for and with Machine Learning (OPTIMAL)}.}
}

\titlerunning{Convergence rate of gradient method}        

\author{ Hadi Abbaszadehpeivasti
  \and
            Etienne de Klerk
            \and  Moslem Zamani}


\institute{
H. Abbaszadehpeivasti\at Tilburg University, Department of Econometrics and Operations Research, Tilburg, The Netherlands\\
\email{h.abbaszadehpeivasti@tilburguniversity.edu}
\and
E. De Klerk\at Tilburg University, Department of Econometrics and Operations Research, Tilburg, The Netherlands\\
\email{e.deklerk@tilburguniversity.edu}
\and
M. Zamani\at Tilburg University, Department of Econometrics and Operations Research, Tilburg, The Netherlands\\
\email{m.zamani\_1@tilburguniversity.edu}
}

\date{Received: date / Accepted: date}

\maketitle

\begin{abstract}
In this paper, we derive a new linear convergence rate for the gradient method  with fixed step lengths for non-convex smooth optimization problems satisfying the Polyak-\L ojasiewicz (P\L) inequality.  We establish that the P\L\ inequality is a necessary and sufficient condition for linear convergence to the optimal value for this class of problems. We list some related classes of functions for which the gradient method may enjoy linear convergence rate. Moreover, we investigate their relationship with the P\L\ inequality.
\keywords{Weakly convex optimization \and Gradient method  \and Performance estimation problem \and Polyak-\L ojasiewicz inequality \and Semidefinite programming}
\end{abstract}

\section{Introduction}
We consider the gradient method for the unconstrained optimization problem
\begin{align}\label{P}
f^\star:=\inf_{x\in \mathbb{R}^n} f(x),
\end{align}
where $f: \mathbb{R}^n\to \mathbb{R}$ is differentiable, and $f^\star$ is finite. The gradient method with fixed step lengths may be described as follows.
\begin{algorithm}
\caption{Gradient method  with fixed step lengths}
\begin{algorithmic}
\State Set $N$ and $\{t_k\}_{k=1}^N$ (step lengths) and pick $x^1\in\mathbb{R}^n$.
\State For $k=1, 2, \ldots, N$ perform the following step:\\
\begin{enumerate}
\item
$x^{k+1}=x^k-t_k\nabla f(x^k)$
\end{enumerate}
\end{algorithmic}
\label{Alg1}
\end{algorithm}

 In addition, we assume that $f$ has a maximum curvature $L\in (0, \infty)$ and  a minimum curvature $\mu\in (-\infty, L)$. Recall that $f$ has a maximum curvature $L$ if $\tfrac{L}{2}\| .\|^2-f$ is convex. Similarly, $f$ has a  minimum curvature $\mu$ if $f-\tfrac{\mu}{2}\| .\|^2$ is convex. We denote smooth functions with curvature
belonging to the interval $[\mu, L]$ by $\mathcal{F}_{\mu, L}(\mathbb{R}^n)$. The class $\mathcal{F}_{\mu, L}(\mathbb{R}^n)$ includes all smooth functions with Lipschitz gradient (note that $\mu \ge 0$ corresponds to convexity). Indeed, $f$ is $L$-smooth on $\mathbb{R}^n$ if and only if $f$ has a maximum and minimum curvature $\bar L>0$ and $\bar \mu$, respectively, with $\max(\bar L, |\bar\mu|)\leq L$. This class of functions is broad and appears naturally in many models in machine learning, see \cite{Davis} and the references therein.

For $f\in\mathcal{F}_{\mu, L}(\mathbb{R}^n)$, we have the following inequalities for $x, y\in\mathbb{R}^n$
\begin{align}
 f(y)\leq f(x)+\langle \nabla f(x), y-x\rangle+\tfrac{L}{2}\|y-x\|^2,\label{L_E1}\\
 f(y)\geq f(x)+\langle \nabla f(x), y-x\rangle+\tfrac{\mu}{2}\|y-x\|^2;\label{L_E2}
\end{align}
see Lemma 2.5 in \cite{TeodorHypo}.

It is known that the lower bound of first order methods for obtaining an  $\epsilon$-stationary point is of the order  $\Omega\left(\epsilon^{-2}\right)$ for $L$-smooth functions \cite{carmon2019lower}. Hence, it is of interest to investigate the classes of functions for which the gradient method enjoys  linear convergence rate. This subject has been investigated by some scholars and some classes of functions have been introduced where linear convergence is possible; see \cite{hinder2020near, Karimi, hu2021analysis, danilova2020recent} and the references therein. This includes the class of functions satisfying the Polyak-\L ojasiewicz (P\L) inequality \cite{Karimi, polyak1963gradient}.

 \begin{definition}\label{Def_PL}
A function $f$ is said to satisfy the P\L\ inequality on $X\subseteq \mathbb{R}^n$ if there exists $\mu_p>0$ such that
\begin{align}\label{PL}
f(x)-f^\star \leq \tfrac{1}{2\mu_p} \| \nabla f(x)\|^2, \ \ \ \forall x\in X.
\end{align}
\end{definition}
Note that the P\L\ inequality is also known as \textit{ gradient dominated}; see \cite[Definition 4.1.3]{Nesterov}. The following classical theorem provides a linear convergence rate for Algorithm \ref{Alg1} under the P\L\ inequality.

\begin{theorem}{\cite[Theorem 4]{polyak1963gradient}}\label{T.PL.O}
Let $f$ be L-smooth and let $f$ satisfy P\L\ inequality on $X=\{x: f(x)\leq f(x^1)\}$. If  $t_1\in (0, \tfrac{2}{L})$ and $x^2$ is generated by Algorithm \ref{Alg1}, then
\begin{align}
f(x^2)-f^\star\leq \left(1-t_1L(2-t_1L_1)\tfrac{\mu_p}{L}\right)\left(f(x^1)-f^\star  \right).
\end{align}
 In particular, if $t_1=\tfrac{1}{L}$, we have
\begin{align}
f(x^2)-f^\star\leq (1-\tfrac{\mu_p}{L})\left(f(x^1)-f^\star\right).
\end{align}
\end{theorem}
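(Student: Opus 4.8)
The plan is to chain together two one-sided estimates: the descent inequality furnished by $L$-smoothness and the lower bound on the gradient norm furnished by the P\L{} inequality. Since $f$ is $L$-smooth it has maximum curvature $L$, so inequality~\eqref{L_E1} applies with $x=x^1$ and $y=x^2$. Substituting the update rule $x^2-x^1=-t_1\nabla f(x^1)$ into that bound converts every occurrence of $y-x$ into a multiple of $\nabla f(x^1)$, producing a purely scalar estimate of the one-step decrease in terms of $\|\nabla f(x^1)\|^2$.

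Concretely, I would first record
\begin{align*}
f(x^2)\le f(x^1)-t_1\|\nabla f(x^1)\|^2+\tfrac{L}{2}t_1^2\|\nabla f(x^1)\|^2
= f(x^1)-\tfrac{t_1}{2}\bigl(2-t_1L\bigr)\|\nabla f(x^1)\|^2.
\end{align*}
The hypothesis $t_1\in(0,\tfrac{2}{L})$ guarantees that the coefficient $\tfrac{t_1}{2}(2-t_1L)$ is strictly positive, so the gradient term genuinely reduces the objective. The next step is to bound $\|\nabla f(x^1)\|^2$ from below. Because $x^1\in X=\{x:f(x)\le f(x^1)\}$ trivially, the P\L{} inequality~\eqref{PL} is available at $x^1$ and yields $\|\nabla f(x^1)\|^2\ge 2\mu_p\bigl(f(x^1)-f^\star\bigr)$.

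Substituting this lower bound into the one-step decrease and subtracting $f^\star$ from both sides gives
\begin{align*}
f(x^2)-f^\star\le\Bigl(1-t_1(2-t_1L)\mu_p\Bigr)\bigl(f(x^1)-f^\star\bigr),
\end{align*}
which is exactly the asserted rate once one rewrites $t_1(2-t_1L)\mu_p=t_1L(2-t_1L)\tfrac{\mu_p}{L}$. The special case then follows immediately by setting $t_1=\tfrac{1}{L}$, for which the contraction factor collapses to $1-\tfrac{\mu_p}{L}$.

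I do not expect a serious obstacle: the argument is a direct composition of two bounds, with no need to solve an optimization subproblem or to invoke the semidefinite/performance-estimation machinery mentioned elsewhere in the paper. The only points requiring a little care are verifying that the P\L{} inequality genuinely applies at $x^1$ (immediate from the definition of $X$) and tracking the sign of the step-size-dependent coefficient, which is precisely where the restriction $t_1<\tfrac{2}{L}$ is used. Notably, no property of $x^2$ beyond the update rule is required, so the monotonicity $f(x^2)\le f(x^1)$ (which would be what one needs to iterate the estimate over all $k$) emerges as a free byproduct rather than as a separate hypothesis.
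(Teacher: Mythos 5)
Your proof is correct and complete: the one-step descent estimate from \eqref{L_E1} with $y-x=-t_1\nabla f(x^1)$, combined with the P\L{} inequality at $x^1$ (which lies in $X$ trivially), yields exactly the factor $1-t_1(2-t_1L)\mu_p=1-t_1L(2-t_1L)\tfrac{\mu_p}{L}$, confirming in passing that the ``$L_1$'' in the theorem statement is a typo for $L$. Note, however, that the paper never proves this theorem itself---it is quoted from Polyak's 1963 paper, and your argument is precisely the classical one from that literature. What the paper does instead is prove a strictly sharper bound (Theorem \ref{Th.PL}) by a genuinely different route: it casts the worst case as a performance-estimation problem, relaxes it to a semidefinite program via the interpolation conditions of Theorem \ref{T1}, and exhibits explicit dual multipliers so that weak duality certifies the rate. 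Your approach is far more elementary, needing only the two inequalities \eqref{L_E1} and \eqref{PL}; the performance-estimation route is heavier, but it exploits the minimum-curvature parameter $\mu$ and the interpolation constraints at both $x^1$ and $x^2$ rather than only the descent lemma at $x^1$, which is what buys the tighter constant---e.g.\ $\tfrac{2L-2\mu_p}{2L+\mu_p}$ instead of $1-\tfrac{\mu_p}{L}$ when $t_1=\tfrac{1}{L}$ and $\mu=-L$.
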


In this paper we will sharpen this bound; see Theorem \ref{Th.PL}. Under the assumptions of Theorem \ref{T.PL.O},  Karimi et al. \cite{Karimi} established linear convergence rates for some other methods including the randomized coordinate descent. We refer the interested reader to the recent survey \cite{danilova2020recent} for more details on the convergence of non-convex algorithms under the P\L\ inequality.

In this paper, we analyse the gradient method from black-box perspective, which means that we have access to the gradient and the function value at the given point. Furthermore, we study the convergence rate of Algorithm \ref{Alg1} by using performance estimation.

In recent years performance estimation has been used to find worst-case convergence rates of first order methods \cite{abbaszadehpeivasti2021rate,abbaszadehpeivasti2021exact,gupta2022branch,taylor2017smooth,de2017worst,Taylor}, to name but a few. This strong tool first has been introduced by Drori and Teboulle in their seminal paper \cite{drori2014performance}. The idea of  performance estimation is that the infinite dimensional optimization problem concerning the computation of convergence rate may be formulated as a finite dimensional optimization problem (often semidefinite programs) by using interpolation theorems.

The rest of the paper is organized as follows. In Section \ref{PL_sec},  we consider problem \eqref{P} when $f$ satisfies the P\L\ inequality. We derive a new linear convergence rate for Algorithm \ref{Alg1} by using performance estimation.  Furthermore, we provide an optimal step length with respect to the given bound. We also show that the P\L \ inequality is necessary and sufficient for linear convergence, in a well-defined sense. Section \ref{quasar_sec} lists some other situations where Algorithm \ref{Alg1} is linearly convergent. Moreover, we study the relationships between these situations. Finally, we conclude the paper with some remarks and questions for future research.

\subsubsection*{Notation}
The $n$-dimensional Euclidean space is denoted by $\mathbb{R}^n$. Vectors are considered to be column vectors and the superscript $T$ denotes the transpose operation.
 We use $\langle \cdot, \cdot\rangle$ and $\| \cdot\|$ to denote the Euclidean inner product and norm, respectively.
  For a matrix $A$, $A_{ij}$ denotes its $(i, j)$-th entry. The  notation $A\succeq 0$ means the matrix $A$ is symmetric positive semi-definite, and $\tr(A)$ stands for the trace of $A$.

\section{ Linear convergence under the P\L\ inequality}\label{PL_sec}
 This section studies linear convergence of the gradient descent under the P\L\ inequality.
It is readily seen that the P\L\ inequality implies that every stationary point is a global minimum on $X$. By virtue of the descent lemma \cite[Page 29]{Nesterov}, we have
$$
f(x)-f^\star\geq \tfrac{1}{2L}\|\nabla f(x)\|^2, \ \forall x\in \mathbb{R}^n.
$$
Hence, $\mu_p$ can take value in $(0, L]$. On the other hand, we may assume without loss of generality  $\mu\leq \mu_p$.  The inequality is trivial if $\mu\leq 0$, and we therefore assume that  $\mu>0$. By taking the minimum with respect to $y$ from both side of inequality \eqref{L_E2}, we get
\[
f(x)-f^\star\leq \tfrac{1}{2\mu} \| \nabla f(x)\|^2.
\]
Hence, one may assume without loss of generality $\mu_p=\max\{\mu, \mu_p\}$ in inequality \eqref{PL}.

In what follows, we employ performance estimation to get a new bound under the assumptions of Theorem \ref{T.PL.O}.  In this setting, the worst-case convergence rate of Algorithm \ref{Alg1} may be cast as the following optimization problem,
\begin{align}\label{P1}
\nonumber   \max & \ \frac{f(x^2)-f^\star}{f(x^1)-f^\star}\\
&  \  x^2 \ \textrm{is generated  by Algorithm \ref{Alg1} w.r.t.}\ f, x^1  \\
\nonumber  & \ f(x)\geq f^\star \ \forall x\in\mathbb{R}^n\\
\nonumber  & \ f(x)-f^\star \leq \tfrac{1}{2\mu_p} \| \nabla f(x)\|^2, \ \ \ \forall x\in X\\
\nonumber  & \ f\in \mathcal{F}_{\mu, L}(\mathbb{R}^n)\\
\nonumber  & \ x^1\in\mathbb{R}^n.
\end{align}
 In problem \eqref{P1}, $f$ and $x^1$ are decision variables and $X=\{x: f(x)\leq f(x^1)\}$. We may replace the infinite dimensional condition $f \in \mathcal{F}_{\mu,L}(\mathbb{R}^n)$ by a finite set of constraints, by using interpolation. Theorem \ref{T1} gives some necessary and  sufficient conditions for the interpolation of given data by some  $f\in\mathcal{F}_{\mu, L}(\mathbb{R}^n)$.
\begin{theorem}{\cite[Theorem 3.1]{TeodorHypo}}\label{T1}
Let  $\{(x^i; g^i; f^i)\}_{i\in I}\subseteq \mathbb{R}^n\times \mathbb{R}^n \times \mathbb{R}$ with a given index set $I$ and let $L\in (0, \infty]$ and $\mu\in (-\infty, L)$. There
 exists a function  $f\in \mathcal{F}_{\mu, L}(\mathbb{R}^n)$ with
\begin{align}\label{int_fg}
f(x^i) = f^i, \nabla f(x^i) = g^i \ \ i\in I,
\end{align}
if and only if for every $i, j\in I$
{\small{
\begin{align}\label{Int-c}
\tfrac{1}{2(1-\tfrac{\mu}{L})}\left(\tfrac{1}{L}\left\|g^i-g^j\right\|^2+\mu\left\|x^i-x^j\right\|^2-\tfrac{2\mu}{L}\left\langle g^j-g^i,x^j-x^i\right\rangle\right)\leq f^i-f^j-\left\langle g^j, x^i-x^j\right\rangle.
\end{align}
}}
\end{theorem}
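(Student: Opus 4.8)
The plan is to reduce the statement to the classical interpolation theorem for smooth convex functions by means of a quadratic shift. Define $h := f - \tfrac{\mu}{2}\|\cdot\|^2$. By the very definition of $\mathcal{F}_{\mu,L}(\mathbb{R}^n)$, the map $f \mapsto h$ is a bijection between $\mathcal{F}_{\mu,L}(\mathbb{R}^n)$ and $\mathcal{F}_{0,\,L-\mu}(\mathbb{R}^n)$: the requirement that $f - \tfrac{\mu}{2}\|\cdot\|^2$ be convex becomes convexity of $h$, while the requirement that $\tfrac{L}{2}\|\cdot\|^2 - f$ be convex becomes convexity of $\tfrac{L-\mu}{2}\|\cdot\|^2 - h$, i.e. $(L-\mu)$-smoothness of the convex function $h$. (When $L=\infty$ this merely says $h$ is convex, with no upper curvature bound, and $L-\mu>0$ throughout since $\mu<L$.)

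First I would transport the interpolation data through this shift. Since $\nabla h(x) = \nabla f(x) - \mu x$, the triple $(x^i;g^i;f^i)$ is interpolated by some $f\in\mathcal{F}_{\mu,L}(\mathbb{R}^n)$ if and only if the transformed triple $(x^i;\,\tilde g^i;\,\tilde f^i)$, with $\tilde g^i := g^i - \mu x^i$ and $\tilde f^i := f^i - \tfrac{\mu}{2}\|x^i\|^2$, is interpolated by the convex $(L-\mu)$-smooth function $h$. The interpolation condition for the class $\mathcal{F}_{0,\,L-\mu}(\mathbb{R}^n)$ reads
\begin{align*}
\tilde f^i \geq \tilde f^j + \langle \tilde g^j, x^i - x^j\rangle + \tfrac{1}{2(L-\mu)}\|\tilde g^i - \tilde g^j\|^2, \qquad \forall\, i,j\in I.
\end{align*}
Substituting $\tilde f^i,\tilde g^i$ and expanding the inner-product and norm terms is routine; the cross terms collapse via $\tfrac{\mu}{2}(\|x^i\|^2-\|x^j\|^2) - \mu\langle x^j, x^i-x^j\rangle = \tfrac{\mu}{2}\|x^i-x^j\|^2$, and collecting the $\|x^i-x^j\|^2$ coefficients through $\tfrac{\mu}{2}+\tfrac{\mu^2}{2(L-\mu)} = \tfrac{\mu L}{2(L-\mu)}$ reproduces exactly inequality \eqref{Int-c}. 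Because the reduction is an equivalence, this single computation settles both the ``only if'' and the ``if'' directions simultaneously.

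The genuine content therefore lies entirely in the smooth convex interpolation theorem for $\mathcal{F}_{0,\,L-\mu}(\mathbb{R}^n)$, whose sufficiency direction is the main obstacle. Necessity is comparatively soft: for convex $(L-\mu)$-smooth $h$ one combines the gradient inequality for convex functions with the descent lemma (the $h$-analogue of \eqref{L_E1}, with constant $L-\mu$) to obtain the co-coercivity estimate $\langle \nabla h(x)-\nabla h(y),\,x-y\rangle \geq \tfrac{1}{L-\mu}\|\nabla h(x)-\nabla h(y)\|^2$, which rearranges into the displayed inequality. For sufficiency I would pass to conjugates: $h$ is convex and $(L-\mu)$-smooth if and only if $h^*$ is $\tfrac{1}{L-\mu}$-strongly convex, and the data $(x^i;\tilde g^i;\tilde f^i)$ transforms into conjugate data $(\tilde g^i;\,x^i;\,\langle \tilde g^i,x^i\rangle-\tilde f^i)$. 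A $\tfrac{1}{L-\mu}$-strongly convex interpolant of the conjugate data can be written explicitly as the pointwise maximum of the quadratics $g\mapsto (\langle \tilde g^i,x^i\rangle-\tilde f^i)+\langle x^i, g-\tilde g^i\rangle+\tfrac{1}{2(L-\mu)}\|g-\tilde g^i\|^2$, and taking its Legendre conjugate returns the desired $h$. Verifying that at each $\tilde g^i$ the correct quadratic is active with the prescribed value and gradient is precisely where the interpolation inequalities \eqref{Int-c} are consumed in full, and this is the step I expect to demand the most care; the degenerate case $L=\infty$ should be handled separately, since there the quadratic terms vanish and a plain maximum of affine functions already interpolates.
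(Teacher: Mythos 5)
Your proposal cannot be checked against an in-paper argument, because the paper never proves Theorem~\ref{T1}: it imports it verbatim from \cite[Theorem 3.1]{TeodorHypo}, and the proof given in that source is essentially the one you outline — the quadratic shift $f \mapsto f - \tfrac{\mu}{2}\|\cdot\|^2$ reducing $\mathcal{F}_{\mu,L}(\mathbb{R}^n)$ to $\mathcal{F}_{0,L-\mu}(\mathbb{R}^n)$ (valid for any $\mu<L$, including $\mu<0$), the algebraic identification of the shifted smooth convex interpolation conditions with \eqref{Int-c} (your computation is correct, including the coefficient $\tfrac{\mu}{2}+\tfrac{\mu^2}{2(L-\mu)}=\tfrac{\mu L}{2(L-\mu)}$), and the Taylor--Hendrickx--Glineur sufficiency argument via Legendre conjugation and a maximum of quadratics, with $L=\infty$ handled by a maximum of affine functions. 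The only loose point is your necessity step: the inequality $\tilde f^i \geq \tilde f^j + \langle \tilde g^j, x^i-x^j\rangle + \tfrac{1}{2(L-\mu)}\|\tilde g^i-\tilde g^j\|^2$ is pointwise strictly stronger than inner-product co-coercivity and cannot be obtained from it by ``rearrangement''; it follows instead by applying the descent lemma to the convex $(L-\mu)$-smooth function $x \mapsto h(x) - \langle \nabla h(x^j), x\rangle$, which is minimized at $x^j$.
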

It is worth noting that Theorem \ref{T1} addresses non-smooth functions as well. In fact, $L=\infty$ covers non-smooth functions. Note that we only investigate the smooth case in this paper, that is $L\in (0, \infty)$ and $\mu\in (-\infty, 0]$.

By Theorem \ref{T1}, problem \eqref{P1} may be relaxed as follows,
\begin{align}\label{P2}
\nonumber   \max & \ \frac{f^2-f^\star}{f^1-f^\star}\\
\nonumber \st \ & \tfrac{1}{2(1-\tfrac{\mu}{L})}\left(\tfrac{1}{L}\left\|g^i-g^j\right\|^2+\mu\left\|x^i-x^j\right\|^2-\tfrac{2\mu}{L}\left\langle g^j-g^i,x^j-x^i\right\rangle\right)\leq\\
\nonumber& f^i-f^j-\left\langle g^j, x^i-x^j\right\rangle \ \ i, j\in\left\{1, 2\right\}  \\
&  \  x^{2}=x^1-t_1 g^1   \\
\nonumber& \ f^k\geq f^\star  \ \ k\in\{1, 2\}\\
\nonumber  & \ f^k-f^\star \leq \tfrac{1}{2\mu_p}\|g^k\|^2,  \ \ k\in\{1, 2\}.
\end{align}
As we replace the constraint  $ f(x)-f^\star \leq \tfrac{1}{2\mu_p} \| \nabla f(x)\|^2$ for each $x\in X$ by $ f^1-f^\star \leq \tfrac{1}{2\mu_p}\|g^1\|^2$ and $ f^2-f^\star \leq \tfrac{1}{2\mu_p}\|g^2\|^2$, problem \eqref{P2} is a relaxation of problem \eqref{P1}. By using the constraint $x^2=x^1-t_1g^1$, problem \eqref{P2} may be reformulated as,
\begin{align}\label{P3}
\nonumber   \max & \ \frac{f^2-f^\star}{f^1-f^\star}\\
\nonumber \st \ & \frac{1}{2(L-\mu)}\left(\|g^2\|^2+ (1+\mu Lt_1^2-2\mu t_1)\|g^1\|^2+2(\mu t_1-1)\langle g^1,g^2\rangle\right)- \\
\nonumber&f^2 + f^1 - \left\langle g^1,t_1g^1\right\rangle \leq 0\\
\nonumber & \frac{1}{2(L-\mu)}\left(\|g^2\|^2+ (1+\mu Lt_1^2-2\mu t_1)\|g^1\|^2+2(\mu t_1-1)\langle g^1,g^2\rangle\right)- \\
&f^1 + f^2 + \left\langle g^2,t_1g^1\right\rangle\leq 0  \\
\nonumber & \ f^\star-f^k\leq 0  \ \ k\in\left\{1, 2\right\}\\
\nonumber  & \ f^k-f^\star-\tfrac{1}{2\mu_p}\|g^k\|^2\leq 0,  \ \ k\in\left\{1, 2\right\}.
\end{align}
By using the Gram matrix,
\begin{align*}
  X=\begin{pmatrix}
      (g^1)^T \\
      (g^2)^T
    \end{pmatrix}\begin{pmatrix}
                   g^1 & g^2
                 \end{pmatrix}
  =\begin{pmatrix}
      \|g^1\|^1 & \langle g^1,g^2\rangle \\
      \langle g^1,g^2\rangle & \|g^2\|^2
    \end{pmatrix},
\end{align*}
problem \eqref{P3} can be relaxed as follows,
\begin{align}\label{P_gram}
\nonumber   \max & \ \frac{f^2-f^\star}{f^1-f^\star}\\
\nonumber \st \ & \tr(A_1X)-f^2 + f^1 \leq 0\\
& \tr(A_2X)-f^1 + f^2\leq 0  \\
\nonumber  & \ f^1-f^\star+\tr(A_3X)\leq 0 \\
\nonumber  & \ f^2-f^\star+\tr(A_4X)\leq 0\\
\nonumber& \ f^1,f^2\geq f^\star, X\succeq 0,
\end{align}
where
\begin{align*}
  &A_1=\begin{pmatrix}
        \frac{1+\mu Lt_1^2-2\mu t_1}{2(L-\mu)}-t_1 & \frac{\mu t_1-1}{2(L-\mu)} \\
        \frac{\mu t_1-1}{2(L-\mu)} & \frac{1}{2(L-\mu)}
      \end{pmatrix} \ \ \ \ \
  &&A_2=\begin{pmatrix}
        \frac{1+\mu Lt_1^2-2\mu t_1}{2(L-\mu)} & \frac{\mu t_1-1}{2(L-\mu)}+\frac{t_1}{2} \\
        \frac{\mu t_1-1}{2(L-\mu)}+\frac{t_1}{2} & \frac{1}{2(L-\mu)}
      \end{pmatrix} \\
  &A_3=\begin{pmatrix}
        \frac{-1}{\mu^2_p} & 0 \\
        0 & 0
        \end{pmatrix}\ \ \ \
  &&A_4=\begin{pmatrix}
        0 & 0 \\
        0 & \frac{-1}{\mu^2_p}
        \end{pmatrix}.
\end{align*}
In addition, $X, f^1, f^2$ are decision variables in this formulation.
In the next theorem, we obtain an upper bound for problem \eqref{P3} by using  weak duality. This bound gives a new convergence rate for Algorithm \ref{Alg1} for a wide variety of functions.

\begin{theorem}\label{Th.PL}
Let $f\in \mathcal{F}_{\mu, L}(\mathbb{R}^n)$ with $L\in(0, \infty), \mu\in (-\infty, 0]$ and let $f$ satisfy the P\L\ inequality on $X=\{x: f(x)\leq f(x^1)\}$. Suppose that $x^2$ is generated by Algorithm \ref{Alg1}.
\begin{enumerate}[i)]
\item
If $t_1\in\left(0,\tfrac{1}{L}\right)$, then
\begin{align*}
&\frac{f(x^2)-f^\star}{f(x^1)-f^\star}\leq\\
& \left(\frac{\mu_p\left(1-Lt_1\right)+\sqrt{\left(L-\mu\right) \left(\mu-\mu_p \right)\left(2-L t_1\right)\mu_pt_1+\left(L-\mu\right)^2}}{L-\mu+\mu_p}\right)^2.
\end{align*}
\item
If  $t_1\in \left[\tfrac{1}{L}, \tfrac{3}{\mu+L+\sqrt{\mu^2-L\mu+L^2}}\right]$, then
\begin{align*}
\frac{f(x^2)-f^\star}{f(x^1)-f^\star}\leq \left(\frac{(Lt_1-2)(\mu t_1-2)\mu_p t_1}{\left(L+\mu-\mu_p\right)t_1-2}+1\right).
\end{align*}
\item
If $t_1\in\left(\tfrac{3}{\mu+L+\sqrt{\mu^2-L\mu+L^2}}, \tfrac{2}{L}\right)$, then
\begin{align*}
\frac{f(x^2)-f^\star}{f(x^1)-f^\star}\leq\frac{(L t_1-1)^2}{(Lt_1-1)^2+\mu_p t_1(2-Lt_1)}.
\end{align*}
\end{enumerate}
In particular, if $t_1=\tfrac{1}{L}$ and $\mu=-L$, we have
\begin{align}\label{bound.1}
f(x^2)-f^\star\leq \left(\frac{2L-2\mu_p}{2L+\mu_p}\right)\left(f(x^1)-f^\star\right).
\end{align}
\end{theorem}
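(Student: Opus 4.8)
The plan is to bound the optimal value of the semidefinite relaxation \eqref{P_gram} from above by exhibiting a feasible point of its Lagrangian dual, and then to invoke weak duality. Since \eqref{P_gram} is itself a relaxation of the genuine worst-case problem \eqref{P1}, any dual bound obtained this way is automatically a valid convergence rate for Algorithm \ref{Alg1}. Concretely, to certify $f^2-f^\star\le \rho\,(f^1-f^\star)$ for a candidate rate $\rho$, I would attach multipliers $\lambda_1,\lambda_2,\lambda_3,\lambda_4\ge 0$ to the four scalar constraints of \eqref{P_gram} and aggregate them into the single inequality $\sum_i\lambda_i(\text{constraint }i)\le 0$. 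Collecting the affine part of this aggregate in the variables $f^1,f^2,f^\star$ and demanding that it reproduce $f^2-\rho f^1+(\rho-1)f^\star$ yields the linear system $-\lambda_1+\lambda_2+\lambda_4=1$, $\lambda_1-\lambda_2+\lambda_3=-\rho$, and $\lambda_3+\lambda_4=1-\rho$, of which only two are independent. The matrix part of the aggregate is $\tr(SX)$ with $S:=\lambda_1 A_1+\lambda_2 A_2+\lambda_3 A_3+\lambda_4 A_4$; if $S\succeq 0$ then $X\succeq 0$ forces $\tr(SX)\ge 0$, and dropping it leaves exactly $f^2-\rho f^1+(\rho-1)f^\star\le 0$, i.e.\ the desired rate (the degenerate case $f^1=f^\star$, in which $\nabla f(x^1)=0$ by the P\L\ inequality and $x^2=x^1$, is trivial).

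The core of the argument is to push $\rho$ as low as the single remaining constraint $S\succeq 0$ allows. For the $2\times 2$ matrix $S$ this reduces to $S_{11}\ge 0$, $S_{22}\ge 0$, and $\det S\ge 0$, with the extremal rate attained on the boundary where $S$ is singular, $\det S=0$. Substituting the explicit entries of $A_1,\dots,A_4$ and eliminating variables via the two linear equations above turns $\det S$ into a quadratic in the one free multiplier; setting it to zero and solving for $\rho$ produces a closed form in $L,\mu,\mu_p,t_1$. I expect the main obstacle to be computational rather than conceptual: the location of the minimizer of this quadratic relative to the feasible interval for the free multiplier changes as $t_1$ crosses the thresholds $\tfrac{1}{L}$ and $\tfrac{3}{\mu+L+\sqrt{\mu^2-L\mu+L^2}}$, and it is precisely these shifts—between an interior optimum and one pinned by $S_{11}=0$ or $S_{22}=0$—that split the analysis into the three regimes (i)–(iii). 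The delicate part is tracking which constraint is binding in each window, checking that the induced multipliers $\lambda_i$ stay nonnegative, and confirming that the square-root expressions are real (this is where the discriminant $\mu^2-L\mu+L^2$ in the second breakpoint originates).

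For each regime I would therefore present the explicit dual certificate: the values of $\lambda_1,\dots,\lambda_4$ and the resulting $\rho$, followed by a verification that $S\succeq 0$ (equivalently $\det S=0$ together with a nonnegative diagonal entry). This converts the three case statements into three routine, if lengthy, verifications. Throughout, I would sanity-check that each candidate rate lies in $[0,1)$ and that the bounds are monotone and continuous across the breakpoints, so that the piecewise rate is well defined on $(0,\tfrac{2}{L})$.

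Finally, the particular bound \eqref{bound.1} needs no separate derivation: the value $t_1=\tfrac{1}{L}$ is the left endpoint of regime (ii), so one simply substitutes $t_1=\tfrac{1}{L}$ and $\mu=-L$ into $\frac{(Lt_1-2)(\mu t_1-2)\mu_p t_1}{(L+\mu-\mu_p)t_1-2}+1$ and simplifies. Here $Lt_1-2=-1$, $\mu t_1-2=-3$, and $(L+\mu-\mu_p)t_1-2=-\tfrac{2L+\mu_p}{L}$, whence the expression collapses to $1-\tfrac{3\mu_p}{2L+\mu_p}=\tfrac{2L-2\mu_p}{2L+\mu_p}$, matching \eqref{bound.1}. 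As a closing consistency check I would compare this with Theorem \ref{T.PL.O} in the limiting cases $\mu_p\to L$ and general $t_1=\tfrac1L$ to confirm that the new bound recovers or strictly sharpens the classical Polyak rate.
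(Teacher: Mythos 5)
Your framework is exactly the paper's: view \eqref{P_gram} as a tractable relaxation of the worst-case problem, attach nonnegative multipliers $\lambda_1,\ldots,\lambda_4$ to its four scalar constraints, match the affine part of the aggregate to $f^2-\rho f^1+(\rho-1)f^\star$, and invoke weak duality once $S=\sum_i\lambda_iA_i\succeq 0$. Your linear system in the $\lambda_i$ is correct (including the observation that only two equations are independent), your handling of the degenerate case $f^1=f^\star$ is fine, and your evaluation of the regime (ii) formula at $t_1=\tfrac{1}{L}$, $\mu=-L$ correctly yields \eqref{bound.1}. The paper's proof is precisely this dual-certificate argument, with the constraints taken in the form \eqref{P3} and positive semidefiniteness of $S$ certified by writing the residual as a negative multiple of a square.

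The gap is that your argument stops exactly where the work begins: for parts (i)--(iii) you never exhibit the multipliers, never show that eliminating them through $\det S=0$ produces the three stated closed forms, and never verify that the multipliers are nonnegative precisely on the three stated $t_1$-intervals. These verifications are the entire technical content of the paper's proof: for (i) it gives weights $b_2$, $1-b_1$, $b_1$ (on the P\L\ constraint at $x^1$, the P\L\ constraint at $x^2$, and the second interpolation inequality, the first interpolation inequality receiving weight zero); for (ii) weights $a_1,\ldots,a_4$ under which the quadratic form vanishes identically, i.e.\ $S=0$ with all four constraints active; for (iii) weights proportional to $1/\beta$ with the P\L\ constraint at $x^1$ inactive. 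You correctly anticipate that the regimes are governed by changes in the active set, but without the explicit certificates nothing in your proposal ties the dual construction to the specific formulas asserted in (i)--(iii). One detail of your sketch is also mis-attributed: the breakpoint $\tfrac{3}{\mu+L+\sqrt{\mu^2-L\mu+L^2}}$ is not where a square root becomes real (the quantity $\mu^2-L\mu+L^2$ is always positive); it is the root of $L\mu t^2-2(L+\mu)t+3=0$, i.e.\ the point where the multiplier on the P\L\ constraint at $x^1$ changes sign, which is why the certificate must switch between regimes (ii) and (iii). As written, your proposal establishes ``if suitable certificates exist, the rates follow,'' plus one correctly worked special case; exhibiting and verifying the certificates is what remains, and is what the paper actually does.
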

\begin{proof}
First we consider $t_1\in\left(0,\tfrac{1}{L}\right)$. Let
\begin{align*}
&b_1=\frac{\left(L-\mu\right)\left(\alpha+\mu_p\left(1-L t_1\right)\right)}{\alpha\left(L-\mu+\mu_p\right)}\\
&b_2=b_1-\left(\frac{\alpha}{L-\mu}b_1\right)^2,
\end{align*}
where
\[
\alpha=\sqrt{\left(L-\mu\right)\left(\mu_pt_1\left(\mu_p-\mu\right)\left(Lt_1-2\right)+\left(L-\mu\right)\right)}.
\]
It is readily seen that $b_1,b_2\geq 0$. Furthermore,
\begin{align*}
    & {f^2-f^\star}-(b_1-b_2)\left( {f^1-f^\star} \right)-b_2\left(-\frac{1}{2\mu_p}\left\|g^1\right\|^2+f^1-f^\star\right)\\
    &-\left(1-b_1\right)\left(-\frac{1}{2\mu_p}\left\|g^2\right\|^2+f^2-f^\star\right)-b_1\bigg(\frac{1}{2(L-\mu)}\big(\|g^2\|^2+\\
    &(1+\mu Lt_1^2-2\mu t_1)\|g^1\|^2+2(\mu t_1-1)\langle g^1,g^2\rangle\big) -f^1 + f^2 + \left\langle g^2,t_1g^1\right\rangle\bigg)=\\
    &-\frac{1-Lt_1}{2\alpha}\left\|\frac{\alpha+\mu_p\left(1-L t_1\right)}{L-\mu+\mu_p}g^1-g^2\right\|^2\leq 0.
\end{align*}
Therefore, for any feasible solution of problem \eqref{P3}, we have
\begin{align*}
&\frac{f(x^2)-f^\star}{f(x^1)-f^\star}\leq\\
& \left(\frac{\mu_p\left(1-Lt_1\right)+\sqrt{\left(L-\mu\right) \left(\mu-\mu_p \right)\left(2-L t_1\right)\mu_pt_1+\left(L-\mu\right)^2}}{L-\mu+\mu_p}\right)^2,
\end{align*}
and the proof of this part is complete. Now, we consider the case that \linebreak
$t_1\in \left[\tfrac{1}{L}, \tfrac{3}{\mu+L+\sqrt{\mu^2-L\mu+L^2}}\right]$. Suppose that
\begin{align*}
   & a_1=\frac{\mu t_1-1}{\left(L+\mu-\mu_p\right)t_1-2}, \ \ \ \
   && a_2=\frac{1-L t_1}{\left(L+\mu-\mu_p\right)t_1-2} \\
   & a_3=-\frac{\left((Lt_1-2)(\mu t_1-2)-1\right)\mu_pt_1}{\left(L+\mu-\mu_p\right)t_1-2} \ \ \ \
   && a_4=-\frac{\mu_p t_1}{\left(L+\mu-\mu_p\right)t_1-2}.
\end{align*}
 It is readily seen that $a_1,a_2,a_3,a_4\geq 0$. Furthermore,
\begin{align*}
    & {f^2-f^\star}-\left(1-a_3-a_4\right)\left( {f^1-f^\star} \right)-a_3\left(-\frac{1}{2\mu_p}\left\|g^1\right\|^2+f^1-f^\star\right)-\\
    &a_4\left(-\frac{1}{2\mu_p}\left\|g^2\right\|^2+f^2-f^\star\right)-a_1\bigg(\frac{1}{2(L-\mu)}\big(\|g^2\|^2+(1+\mu Lt_1^2-2\mu t_1)\|g^1\|^2+\\
    &2(\mu t_1-1)\langle g^1,g^2\rangle\big) -f^1 + f^2 + \left\langle g^2,t_1g^1\right\rangle\bigg)-a_2\bigg(\frac{1}{2(L-\mu)}\big(\|g^2\|^2+ \\
    &(1+\mu Lt_1^2-2\mu t_1)\|g^1\|^2+2(\mu t_1-1)\langle g^1,g^2\rangle\big) -f^2 + f^1 - \left\langle g^1,t_1g^1\right\rangle \bigg)=0.
\end{align*}
Therefore, for any feasible solution of problem \eqref{P3}, we have
$$
f(x^2)-f^\star- \left(\frac{L\mu_p\mu t_1^3-2\mu_p\left(L+\mu\right)t_1^2+4\mu_p t_1}{\left(L+\mu-\mu_p\right)t_1-2}+1 \right)\left(f(x^1)-f^\star  \right)\leq 0.
$$
Now, we prove the last part. Assume that $t_1\in\left(\tfrac{3}{\mu+L+\sqrt{\mu^2-L\mu+L^2}},\tfrac{2}{L}\right)$. With some algebra, one can show
{\small{
\begin{align*}
    & {f^2-f^\star}-\left(\frac{(L t_1-1)^2}{\beta}\right)\left( {f^1-f^\star} \right)-
    \left(\frac{\mu_p t_1(2-Lt_1)}{\beta}\right)\left(-\frac{1}{2\mu_p}\left\|g^2\right\|^2+f^2-f^\star\right)-\\
    &\left(\frac{(L t_1-1)(2-Lt_1)}{\beta}\right)\bigg(\frac{1}{2(L-\mu)}\big(\|g^2\|^2+(1+\mu Lt_1^2-2\mu t_1)\|g^1\|^2+2(\mu t_1-1)\langle g^1,g^2\rangle\big) -\\
    &f^2 + f^1 - \left\langle g^1,t_1g^1\right\rangle \bigg)-\left( \frac{L t_1-1}{\beta} \right)\bigg(\frac{1}{2(L-\mu)}\big(\|g^2\|^2+ (1+\mu Lt_1^2-2\mu t_1)\|g^1\|^2+\\
&2(\mu t_1-1)\langle g^1,g^2\rangle\big) -f^1 + f^2 + \left\langle g^2,t_1g^1\right\rangle\bigg)=\\
&-\frac{(1-L t_1) \left(\mu t_1(Lt_1-2)+2(1-Lt_1)+1\right)}{2 \beta(L-\mu)}\left\|\sqrt{Lt_1-1}g^1+\tfrac{1}{\sqrt{Lt_1-1}} g^2\right\|^2\leq 0,
\end{align*}
}}
where,
\begin{align*}
  \beta=(Lt_1-1)^2+\mu_p t_1(2-Lt_1).
\end{align*}
The rest of the proof is similar to that of the former cases.
\end{proof}

One may wonder how we obtain Lagrange multipliers in Theorem \ref{Th.PL}. The multipliers are computed by solving the dual of problem \eqref{P_gram} by hand. Furthermore, one can verify that Theorem \ref{Th.PL} provides a tighter bound in comparison with the convergence rate given in Theorem \ref{T.PL.O} for $L$-smooth functions with $t_1\in (0, \tfrac{2}{L})$. The next proposition gives the optimal step length with respect to the worst-case convergence rate.

\begin{proposition}\label{Por_opt_s}
  Let $f\in \mathcal{F}_{\mu, L}(\mathbb{R}^n)$ with $L\in(0, \infty), \mu\in (-\infty, 0]$ and let $f$ satisfy the P\L\ inequality on $X=\{x: f(x)\leq f(x^1)\}$. Suppose that $r(t)=L\mu(L+\mu-\mu_p)t^3- \left(L^2-\mu_p (L+\mu)+5 L \mu+\mu^2\right)t^2+4(L+\mu)t-4$ and $\bar t$ is the unique root of $r$ in $\left[\tfrac{1}{L}, \tfrac{3}{\mu+L+\sqrt{\mu^2-L\mu+L^2}}\right]$ if it exists. Then $t^\star$ given by
 \begin{align*}
  t^\star=
  \begin{cases}
    \bar t & \mbox{\ if\ } \bar t \mbox{\ exists}\\
    \tfrac{3}{\mu+L+\sqrt{\mu^2-L\mu+L^2}} & \mbox{otherwise},
  \end{cases}
 \end{align*}
  is the optimal step length for Algorithm \ref{Alg1} with respect to the worst-case convergence rate.
\end{proposition}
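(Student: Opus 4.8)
The plan is to treat the worst-case rate of Theorem \ref{Th.PL} as an explicit function $\rho$ of the step length $t\in(0,\tfrac{2}{L})$, defined piecewise by the three expressions (i)--(iii), and to minimize it over the whole admissible range. First I would record that $\rho$ is continuous across the two breakpoints $t=\tfrac1L$ and $t=\tfrac{3}{\mu+L+\sqrt{\mu^2-L\mu+L^2}}$. At $t=\tfrac1L$ expression (i) simplifies to $\tfrac{(L-\mu)(L-\mu_p)}{L(L-\mu+\mu_p)}$, where one uses that the quadratic $L^2-L\mu+\mu\mu_p-\mu_p^2$ factors as $(L-\mu+\mu_p)(L-\mu_p)$; this coincides with (ii) evaluated at $t=\tfrac1L$. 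A similar identity shows that (ii) and (iii) agree at the second breakpoint, so $\rho$ is a single continuous function on $(0,\tfrac{2}{L})$.

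Next I would dispose of the two outer intervals by monotonicity. On $(0,\tfrac1L)$, writing the numerator of (i) as $\mu_p(1-Lt)+\sqrt{R(t)}$ with $R(t)=(L-\mu)(\mu-\mu_p)\mu_p\,t(2-Lt)+(L-\mu)^2$, one has $R'(t)=2(L-\mu)(\mu-\mu_p)\mu_p(1-Lt)<0$, since $\mu-\mu_p<0$ and $1-Lt>0$; both summands of the (nonnegative) numerator are therefore decreasing and $\rho$ is strictly decreasing there. On $(\tfrac{3}{\mu+L+\sqrt{\mu^2-L\mu+L^2}},\tfrac2L)$, writing (iii) as $\rho(t)=\bigl(1+\mu_p\phi(t)\bigr)^{-1}$ with $\phi(t)=\tfrac{t(2-Lt)}{(Lt-1)^2}$, a short computation gives $\phi'(t)=\tfrac{-2}{(Lt-1)^3}<0$, so $\rho$ is strictly increasing there. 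By continuity this confines the global minimizer to the middle interval $\bigl[\tfrac1L,\tfrac{3}{\mu+L+\sqrt{\mu^2-L\mu+L^2}}\bigr]$.

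On the middle interval I would differentiate $g(t):=\tfrac{(Lt-2)(\mu t-2)\mu_p t}{(L+\mu-\mu_p)t-2}+1$. Applying the quotient rule and simplifying, the numerator of $g'$ equals $2\mu_p\,r(t)$ while its denominator $\bigl((L+\mu-\mu_p)t-2\bigr)^2$ is strictly positive (the factor $(L+\mu-\mu_p)t-2$ stays negative on the interval because $\mu\le 0<\mu_p$); hence $g'$ has the sign of $r$, and the stationary points of $g$ are exactly the roots of $r$. Evaluating gives $r(\tfrac1L)=\tfrac{\mu_p-L}{L}\le 0$ because $\mu_p\in(0,L]$. The decisive ingredient is that at any root $\bar t$ of $r$ in the interval one has $g''(\bar t)=\tfrac{2\mu_p r'(\bar t)}{((L+\mu-\mu_p)\bar t-2)^2}$, so a root is a local minimum precisely when $r'(\bar t)>0$; I would verify $r'(\bar t)>0$ at every interior root, which simultaneously shows that the root is unique (a second root would require an intervening downcrossing, where $r'\le 0$) and that it is a minimizer.

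Assembling these facts gives the dichotomy: since $r(\tfrac1L)\le 0$, either $r<0$ on the entire middle interval, in which case $g$ is strictly decreasing and its minimum is attained at the right endpoint $\tfrac{3}{\mu+L+\sqrt{\mu^2-L\mu+L^2}}$; or $r$ has its unique root $\bar t$, at which $g$ switches from decreasing to increasing, so $\bar t$ is the minimizer. Combined with the outer-interval monotonicity this identifies the global minimizer as exactly the claimed $t^\star$. I expect the main obstacle to be the sign verification $r'(\bar t)>0$ for a general cubic $r$ when $\mu<0$, since the leading coefficient $L\mu(L+\mu-\mu_p)$ changes sign with $L+\mu-\mu_p$; this is the step that secures both uniqueness of the interior critical point and that it is a minimum rather than a maximum. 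When $\mu=0$ the cubic degenerates to a downward parabola whose smaller root lies in $(\tfrac1L,\tfrac2L)$, and this step becomes transparent.
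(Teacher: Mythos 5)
Your reduction is sound as far as it goes, and in fact supplies details the paper glosses over: continuity at the two breakpoints, the explicit monotonicity proofs on the outer intervals, the identity that the numerator of $g'$ is $2\mu_p r(t)$ over $\bigl((L+\mu-\mu_p)t-2\bigr)^2$ (the paper never spells out how the cubic $r$ arises), and the evaluation $r(\tfrac1L)=\tfrac{\mu_p-L}{L}\le 0$, all of which I checked and which are correct. The problem is the step you yourself flag as the ``main obstacle'': you never prove that $r'(\bar t)>0$ at a root $\bar t$ of $r$ in the middle interval, and your entire endgame rests on that claim. Without it, a root of $r$ could a priori be a touch point ($r\le 0$ on both sides of $\bar t$) or a down-crossing; in the touch-point case $g$ is non-increasing on the whole interval and the minimizer is the right endpoint rather than $\bar t$, so the claimed dichotomy fails, and your uniqueness argument (``a second root would require an intervening downcrossing'') collapses as well, since it is derived from the same unproven sign condition. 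Because $r$ is a cubic whose leading coefficient $L\mu(L+\mu-\mu_p)$ can take either sign when $\mu<0$, this verification is not routine --- it is precisely the analytic heart of the proposition, and the proposal leaves it open.

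The paper closes exactly this hole by a different device: it proves that the bound $h$ (your $g$) is \emph{convex} on $\left[\tfrac1L,\tfrac{3}{\mu+L+\sqrt{\mu^2-L\mu+L^2}}\right]$. It writes the non-constant part of $h$ as a product $pq$, with $p(t)=\tfrac{\mu t-2}{(L+\mu-\mu_p)t-2}$ and $q(t)=(Lt-2)\mu_p t$ when $L+\mu-\mu_p\le 0$, and with $p(t)=\tfrac{\mu_p t}{(L+\mu-\mu_p)t-2}$, $q(t)=(Lt-2)(\mu t-2)$ when $L+\mu-\mu_p>0$, and checks six sign conditions on $p,q,p',q',p'',q''$ making every term of $h''=p''q+2p'q'+pq''$ nonnegative. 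Convexity then does all the work your pointwise condition was meant to do: any stationary point of $h$ (equivalently, any root of $r$) is automatically a global minimizer on the interval, and if $r$ has no root there, then $h'(\tfrac1L)=\tfrac{2L\mu_p(\mu_p-L)}{(L+\mu_p-\mu)^2}\le 0$ together with continuity of $h'$ forces $h'<0$ throughout, putting the minimum at the right endpoint. To complete your route you would have to establish your claim that every root of $r$ in the interval is a strict up-crossing, which is essentially equivalent to re-deriving the paper's convexity statement; so as written the proposal has a genuine gap.
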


\begin{proof}
  To obtain an optimal step length, we need to solve the optimization problem $$\min_{t\in(0, \tfrac{2}{L})} h(t),$$ where $h$ is given by
  {\small{
  \begin{align*}
  h(t)=
  \begin{cases}
    \left(\frac{\mu_p\left(1-Lt\right)+\sqrt{\left(L-\mu\right) \left(\mu-\mu_p \right)\left(2-L t\right)\mu_p t+\left(L-\mu\right)^2}}{L-\mu+\mu_p}\right)^2  &  t\in\left(0,\tfrac{1}{L}\right) \\
    \frac{(Lt-2)(\mu t-2)\mu_p t}{\left(L+\mu-\mu_p\right)t-2}+1 & t\in \left[\tfrac{1}{L}, \tfrac{3}{\mu+L+\sqrt{\mu^2-L\mu+L^2}}\right] \\
    \frac{(L t-1)^2}{(Lt-1)^2+(2-Lt)\mu_p t}  & t\in\left(\tfrac{3}{\mu+L+\sqrt{\mu^2-L\mu+L^2}}, \tfrac{2}{L}\right).
  \end{cases}
  \end{align*}
  }}
   It is easily seen that $h$ is decreasing on $\left(0,\tfrac{1}{L}\right)$ and is increasing on \linebreak
   $\left(\tfrac{3}{\mu+L+\sqrt{\mu^2-L\mu+L^2}}, \tfrac{2}{L}\right)$. Hence, we need investigate the closed interval \linebreak
   $\left[\tfrac{1}{L}, \tfrac{3}{\mu+L+\sqrt{\mu^2-L\mu+L^2}}\right]$. We will show that $h$ is convex on the interval in question. First, we consider the case $L+\mu-\mu_p\leq 0$. Let $p(t)=\frac{\mu t-2}{\left(L+\mu-\mu_p\right)t-2}$ and $q(t)=(Lt-2)\mu_p t$.
    By some algebra, one can show the following inequalities for $t\in\left[\tfrac{1}{L}, \tfrac{3}{\mu+L+\sqrt{\mu^2-L\mu+L^2}}\right]$:
   \begin{align*}
 & p(t)\geq 0 \ && q(t)\leq 0 \\
 & p^\prime(t)\geq 0 \ && q^\prime(t)\geq 0 \\
  & p^{\prime \prime}(t)\leq 0 \ && q^{\prime \prime}(t)\geq 0.
   \end{align*}
Hence, the convexity of $h$ follows from $h^{\prime \prime}=p^{\prime \prime}q+2p^{\prime}q^{\prime}+pq^{\prime \prime}$. Now, we investigate the case that $L+\mu-\mu_p>0$. Suppose that $p(t)=\frac{\mu_p t}{\left(L+\mu-\mu_p\right)t-2}$ and $q(t)=(Lt-2)(\mu t-2)$. For these functions, we have the following inequalities
  \begin{align*}
 & p(t)\leq 0 \ && q(t)\geq 0 \\
 & p^\prime(t)\leq 0 \ && q^\prime(t)\leq 0 \\
  & p^{\prime \prime}(t)\geq 0 \ && q^{\prime \prime}(t)\leq 0,
   \end{align*}
   which analogous to the former case one can infer the convexity of $h$ on the given interval.
Hence, if $h$ has a root in $\left[\tfrac{1}{L}, \tfrac{3}{\mu+L+\sqrt{\mu^2-L\mu+L^2}}\right]$, it will be the minimum. Otherwise, the point $t^\star=\tfrac{3}{\mu+L+\sqrt{\mu^2-L\mu+L^2}}$ will be the minimum. This follows from the point that $h^\prime(\tfrac{1}{L})=\tfrac{2L\mu_p(\mu_p-L)}{(L+\mu_p-\mu)^2}\leq 0$ and the convexity of $h$ on the interval in question.
\end{proof}

Thanks to Proposition \ref{Por_opt_s},  the following corollary gives the optimal step length  for $L$-smooth convex functions satisfying the P\L\ inequality.

\begin{corollary}
 If $f$ is an $L$-smooth convex function satisfying the P\L\ inequality, then the optimal step length with respect to the worst-case convergence rate is \linebreak
 $\min\left\{\tfrac{2}{L+\sqrt{L\mu_p}}, \tfrac{3}{2L}\right\}$.
\end{corollary}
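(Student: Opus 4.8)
The plan is to specialize Proposition \ref{Por_opt_s} to the convex case. Since an $L$-smooth convex function has curvature in $[0,L]$, the relevant parameter is $\mu=0$, so I would begin by substituting $\mu=0$ throughout the statement of Proposition \ref{Por_opt_s}. The right endpoint of the middle interval collapses to
\begin{align*}
\left.\tfrac{3}{\mu+L+\sqrt{\mu^2-L\mu+L^2}}\right|_{\mu=0}=\tfrac{3}{L+L}=\tfrac{3}{2L},
\end{align*}
while the cubic $r(t)$ loses its leading term (its $t^3$ coefficient carries a factor $\mu$) and reduces to the quadratic
\begin{align*}
r(t)=-L(L-\mu_p)t^2+4Lt-4 .
\end{align*}
Thus the optimal step length is governed entirely by the roots of this quadratic together with the endpoint $\tfrac{3}{2L}$.

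Next I would solve $r(t)=0$ explicitly. The quadratic formula gives the two roots $\tfrac{2(L\pm\sqrt{L\mu_p})}{L(L-\mu_p)}$, which, after rationalizing via $L-\mu_p=(\sqrt L-\sqrt{\mu_p})(\sqrt L+\sqrt{\mu_p})$ and $L\pm\sqrt{L\mu_p}=\sqrt L(\sqrt L\pm\sqrt{\mu_p})$, simplify to $\tfrac{2}{L+\sqrt{L\mu_p}}$ and $\tfrac{2}{L-\sqrt{L\mu_p}}$. Because $\mu_p\le L$ forces $\sqrt{L\mu_p}\le L$, the larger root satisfies $\tfrac{2}{L-\sqrt{L\mu_p}}\ge \tfrac{2}{L}$ and hence lies outside the admissible range $(0,\tfrac{2}{L})$; so the only candidate for $\bar t$ is the smaller root $\tfrac{2}{L+\sqrt{L\mu_p}}$, and when it lies in the middle interval it is automatically the \emph{unique} root there.

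It then remains to decide when this candidate falls inside $\left[\tfrac1L,\tfrac{3}{2L}\right]$. I would check the two inequalities separately: $\tfrac{2}{L+\sqrt{L\mu_p}}\ge\tfrac1L$ reduces to $L\ge\sqrt{L\mu_p}$, i.e.\ $\mu_p\le L$, which always holds; and $\tfrac{2}{L+\sqrt{L\mu_p}}\le\tfrac{3}{2L}$ reduces to $L\le 3\sqrt{L\mu_p}$, i.e.\ $\mu_p\ge\tfrac{L}{9}$. Consequently, when $\mu_p\ge\tfrac L9$ the root $\bar t=\tfrac{2}{L+\sqrt{L\mu_p}}$ exists in the interval and Proposition \ref{Por_opt_s} yields $t^\star=\tfrac{2}{L+\sqrt{L\mu_p}}\le\tfrac{3}{2L}$; when $\mu_p<\tfrac L9$ no root lies in the interval, the ``otherwise'' branch applies, and $t^\star=\tfrac{3}{2L}<\tfrac{2}{L+\sqrt{L\mu_p}}$. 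In both regimes $t^\star=\min\left\{\tfrac{2}{L+\sqrt{L\mu_p}},\tfrac{3}{2L}\right\}$, as claimed. The only genuine work is the root rationalization and the threshold computation $\mu_p=\tfrac L9$; both are elementary, so I do not anticipate any real obstacle beyond careful bookkeeping of the two cases.
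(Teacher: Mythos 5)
Your proposal is correct and follows exactly the route the paper intends: the corollary is stated as a direct consequence of Proposition \ref{Por_opt_s}, and specializing that proposition to $\mu=0$ (the curvature bound for $L$-smooth convex functions), reducing $r$ to the quadratic $-L(L-\mu_p)t^2+4Lt-4$, and locating its admissible root $\tfrac{2}{L+\sqrt{L\mu_p}}$ relative to the interval $\bigl[\tfrac{1}{L},\tfrac{3}{2L}\bigr]$ is precisely the intended argument. Your threshold $\mu_p=\tfrac{L}{9}$ and the two-case analysis correctly recover the stated $\min\bigl\{\tfrac{2}{L+\sqrt{L\mu_p}},\tfrac{3}{2L}\bigr\}$.
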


The constant $\tfrac{2}{L+\sqrt{L\mu_p}}$ also appears in the  the  fast gradient algorithm introduced in \cite{necoara2019linear} for $L$-smooth convex functions which are $(1, \mu_s)$-quasar-convex, see Definition \ref{Def.Qs}. By Theorem \ref{rel2}, $(1, \mu_s)$-quasar-convexity implies the P\L\ inequality with the same constant.  Algorithm \ref{Alg2} describes the method in question.
\begin{algorithm}
\caption{Fast gradient method}
\begin{algorithmic}
\State Pick $x^1\in\mathbb{R}^n$, set $N$ and $y^1=x^1$.
\State For $k=1, 2, \ldots, N$ perform the following step:\\
\begin{enumerate}
\item
$y^{k+1}=x^k-\tfrac{1}{L}\nabla f(x^k)$
\item
$x^{k+1}=y^{k+1}+\tfrac{\sqrt{L}-\sqrt{\mu_p}}{\sqrt{L}+\sqrt{\mu_p}}\left(y^{k+1}-y^k \right)$
\end{enumerate}
\end{algorithmic}
\label{Alg2}
\end{algorithm}

One can verify that Algorithm \ref{Alg2}, at the first iteration, generates $x^2=x^1-\tfrac{2}{L+\sqrt{L\mu_p}}\nabla f(x^1)$.

A more general form of the P\L\ inequality, called the \L ojasiewicz inequality, may be written as
\begin{align}\label{L}
\left  (f(x)-f^\star \right)^{2\theta} \leq \tfrac{1}{2\mu_p} \| \nabla f(x)\|^2, \ \ \ \forall x\in X,
\end{align}
where $\theta\in (0, 1)$. It is known that when $\theta\in (0, \tfrac{1}{2}]$ some algorithms, including Algorithm \ref{Alg1}, is linearly convergent; see \cite{attouch2009convergence,attouch2010proximal}. In the next theorem, we show that  for functions with finite maximum and minimum curvature the \L ojasiewicz inequality cannot hold for $\theta\in (0, \tfrac{1}{2})$.

\begin{theorem}
Let $f\in \mathcal{F}_{\mu, L}(\mathbb{R}^n)$ be a non-constant function. If $f$ satisfies the \L ojasiewicz inequality on $X=\{x: f(x)\leq f(x^1)\}$, then  $\theta\geq \tfrac{1}{2}$.
\end{theorem}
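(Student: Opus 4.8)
The plan is to combine the \L ojasiewicz inequality \eqref{L} with the descent lemma already recorded at the start of Section~\ref{PL_sec}. Writing $d(x):=f(x)-f^\star\ge 0$, the descent lemma gives $\|\nabla f(x)\|^2\le 2L\,d(x)$ for every $x\in\mathbb{R}^n$, using only that $f$ has finite maximum curvature $L$ (the minimum curvature $\mu$ plays no role in the argument). Substituting this upper bound on $\|\nabla f(x)\|^2$ into \eqref{L}, I obtain, for all $x\in X$,
\[
d(x)^{2\theta}\le \frac{1}{2\mu_p}\|\nabla f(x)\|^2\le \frac{L}{\mu_p}\,d(x).
\]
Thus on the whole sublevel set $X$ the scalar inequality $d^{2\theta}\le (L/\mu_p)\,d$ must hold.

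Next I would exploit this at points where $d$ is small and positive. At any such point $d>0$, dividing by $d^{2\theta}$ yields $1\le (L/\mu_p)\,d^{\,1-2\theta}$. If $\theta<\tfrac12$ then $1-2\theta>0$, so letting $d\to 0^+$ along a suitable sequence forces $d^{\,1-2\theta}\to 0$ and hence $1\le 0$, a contradiction. Consequently $\theta\ge\tfrac12$, which is the claim; the boundary case $\theta=\tfrac12$ is consistent, reducing merely to the known relation $\mu_p\le L$.

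The one point requiring care, and the main obstacle, is producing a sequence $x_k\in X$ with $d(x_k)\to 0^+$ and $d(x_k)>0$; this is exactly where non-constancy of $f$ enters, and where one must work in the nontrivial case $f(x^1)>f^\star$ (otherwise $X$ is the set of global minimizers, $d\equiv 0$ on $X$, and \eqref{L} holds vacuously for every $\theta$). By definition of $f^\star=\inf f$ there is a sequence $y_k$ with $f(y_k)\to f^\star$, and for large $k$ these satisfy $f(y_k)<f(x^1)$, hence $y_k\in X$. If infinitely many $y_k$ have $f(y_k)>f^\star$, they furnish the desired sequence directly. Otherwise $f^\star$ is attained, say at some $y$, and I would invoke continuity of $f$: along the segment joining $y$ to $x^1$ the intermediate value theorem shows that $f$ takes every value in $(f^\star,f(x^1))$, producing points of $X$ with $d$ arbitrarily small and strictly positive. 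Either way the required sequence exists, completing the contradiction and hence the proof.
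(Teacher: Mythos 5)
Your proof is correct, but it takes a genuinely different and more elementary route than the paper's. The paper argues by contradiction through the machinery it has just built: it runs Algorithm \ref{Alg1}, uses $\|\nabla f(x^k)\|\to 0$ together with \eqref{L} to get iterates with value tending to $f^\star$, rescales $h=f/f^1$ so that the \L ojasiewicz inequality becomes a P\L\ inequality whose constant effectively scales like $\mu_p (f^1)^{2\theta-1}$, and then invokes the linear-rate bound \eqref{bound.1} of Theorem \ref{Th.PL}; since $2\theta-1<0$, that rate becomes negative for small $f^1$, contradicting $f\geq f^\star$. You bypass all of this: combining \eqref{L} with the descent-lemma inequality $\|\nabla f(x)\|^2\leq 2L\left(f(x)-f^\star\right)$ (already recorded at the start of Section \ref{PL_sec}) yields the scalar inequality $\left(f(x)-f^\star\right)^{2\theta}\leq \tfrac{L}{\mu_p}\left(f(x)-f^\star\right)$ on $X$, which visibly fails when $\theta<\tfrac12$ at points where the gap $f(x)-f^\star$ is positive and small; the only remaining work is to exhibit such points in $X$, which you do via an infimizing sequence, or, when $f^\star$ is attained, the intermediate value theorem along a segment. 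Your argument needs only the maximum curvature $L$ and continuity of $f$ — no gradient iterates, no performance-estimation bound, no rescaling — and it is also more explicit than the paper about the degenerate case $f(x^1)=f^\star$, where $X$ consists of global minimizers and \eqref{L} holds vacuously for every $\theta$; both proofs implicitly require $f(x^1)>f^\star$ (the paper's small-value points come from the gradient iterates, which likewise give nothing in that degenerate case), so flagging it is a genuine improvement in care, not a deviation. What the paper's heavier route buys is a demonstration of how the \L ojasiewicz exponent interacts with the linear convergence rate derived in Theorem \ref{Th.PL}; what yours buys is a shorter, self-contained proof of the stated theorem.
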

\begin{proof}
To the contrary, assume that  $\theta\in(0, \tfrac{1}{2})$. Without loss of generality, we may assume that $\mu=-L$. It is known that Algorithm \ref{Alg1} generates a decreasing sequence $\{f(x^k)\}$ and it is convergent, that is $\|\nabla f(x^k)\|\to 0$; see \cite[page 28]{Nesterov}. Furthermore, \eqref{L} implies that $f(x^k)\to f^\star$. Without loss of generality, we may assume that $f^\star=0$. First, we investigate the case that $f(x^1)=1$. The semi-definite programming problem corresponding to performance estimation in this case may be formulated as follows,
\begin{align}\label{P_4}
\nonumber   \max & \ f^2\\
\nonumber \st \ & \tr(A_1X)-f^2 + 1 \leq 0\\
& \tr(A_2X)-1 + f^2\leq 0  \\
\nonumber  & \ 1+\tr(A_3X)\leq 0 \\
\nonumber  & \ (f^2)^{2\theta}+\tr(A_4X)\leq 0\\
\nonumber& \ f^2\geq 0, X\succeq 0.
\end{align}   Since  Algorithm \ref{Alg1} is a monotone method, $f^2$ can take value in $[0,1]$. In addition, we have $f^2\leq (f^2)^{2\theta}$ on this interval. Hence, by using Theorem \ref{Th.PL}, we get the following bound,
\[
f^2\leq \frac{2L-2\mu_p}{2L+\mu_p}.
\]
 Now, suppose that $f(x^1)=f^1>0$. Consider the function $h: \mathbb{R}^n\to\mathbb{R}$ given by $h(x)=\tfrac{f(x)}{f^1}$. It is seen that $h$ is $\tfrac{L}{f^1}$-smooth and
 \[
h(x)^{2\theta} \leq \tfrac{1}{2\mu_p(f^1)^{2\theta-2}} \| \nabla h(x)\|^2, \ \ \ \forall x\in X.
 \]
 As Algorithm \ref{Alg1} generates the same $x^2$ for both functions, by using the first part, we obtain
 \[
\frac{f(x^2)}{f(x^1)}\leq \frac{2L(f^1)^{-1}-2\mu_p(f^1)^{2\theta-2}}{2L(f^1)^{-1}+\mu_p(f^1)^{2\theta-2}}=\frac{2L-2\mu_p(f^1)^{2\theta-1}}{2L+\mu_p(f^1)^{2\theta-1}}.
\]
For $f^1$ sufficiently small, we have $\frac{2L-2\mu_p(f^1)^{2\theta-1}}{2L+\mu_p(f^1)^{2\theta-1}}< 0$, which contradicts $f^\star\geq 0$ and  the proof is complete.
\end{proof}

Necoara et al.  gave necessary and sufficient conditions for linear convergence of the gradient method with constant step lengths when $f$ is a smooth convex function; see \cite[Theorem 13]{necoara2019linear}. Indeed, the theorem says that Algorithm \ref{Alg1} is linearly convergent if and only if $f$ has a quadratic functional
growth on $\{x: f(x)\leq f(x^1)\}$; see Definition \ref{def3}. However, this theorem does not hold necessarily for non-convex functions. The next theorem provides necessary and sufficient conditions for  linear convergence of Algorithm \ref{Alg1}.

\begin{theorem}
Let $f\in \mathcal{F}_{\mu, L}(\mathbb{R}^n)$. Algorithm \ref{Alg1} is linearly convergent to the optimal value  if and only if $f$ satisfies P\L\ inequality on $\{x: f(x)\leq f(x^1)\}$.
\end{theorem}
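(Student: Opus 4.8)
The plan is to prove the two implications separately, with the forward direction (P\L\ $\Rightarrow$ linear) being essentially a corollary of the work already done and the reverse direction requiring a fresh one-step argument.

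For the direction ``P\L\ $\Rightarrow$ linear convergence,'' I would fix a step length $t_1\in(0,\tfrac{2}{L})$ so that Algorithm \ref{Alg1} is a descent method; by the descent lemma the sequence $\{f(x^k)\}$ is non-increasing, hence every iterate remains in the sublevel set $X=\{x: f(x)\leq f(x^1)\}$. Since the P\L\ inequality holds on $X$, it holds in particular on each smaller sublevel set $\{y: f(y)\leq f(x^k)\}$, so the relevant case of Theorem \ref{Th.PL} applies at every iterate and yields a contraction factor $c\in[0,1)$ independent of $k$. Chaining these one-step bounds gives $f(x^k)-f^\star\leq c^{k-1}(f(x^1)-f^\star)$, i.e.\ linear convergence.

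The substance is in the reverse direction. Here I would take an arbitrary $x\in X$, perform one step $x^+=x-t\nabla f(x)$, and bound the per-iteration decrease from two sides. Linear convergence supplies a lower bound $f(x)-f(x^+)\geq(1-c)(f(x)-f^\star)$, while the minimum-curvature inequality \eqref{L_E2} evaluated at $y=x^+$ gives the upper bound $f(x)-f(x^+)\leq\tfrac{t(2-\mu t)}{2}\|\nabla f(x)\|^2$, the coefficient being positive because $\mu\leq 0$ and $t>0$. Combining the two estimates yields $f(x)-f^\star\leq\tfrac{t(2-\mu t)}{2(1-c)}\|\nabla f(x)\|^2$, which is exactly the P\L\ inequality with $\mu_p=\tfrac{1-c}{t(2-\mu t)}>0$.

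The main obstacle is conceptual rather than computational: making precise what ``Algorithm \ref{Alg1} is linearly convergent'' means, so that the one-step decrease estimate can be invoked at an arbitrary point of $X$ and not merely along the single trajectory emanating from $x^1$. I would resolve this by reading linear convergence as a uniform per-iteration contraction $f(x^+)-f^\star\leq c(f(x)-f^\star)$ valid for every admissible starting point in $X$ (equivalently, applying it to each iterate generated from $x^1$ and using that the sublevel sets are nested). With that reading fixed, the reverse direction follows immediately from the two-sided bound above, and the only remaining checks are that the constructed $\mu_p$ is strictly positive and that the contraction factor produced in the forward direction is strictly below one.
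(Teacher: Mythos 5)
Your proposal is correct and takes essentially the same route as the paper: the forward direction is delegated to Theorem \ref{Th.PL}, and the reverse direction combines the per-iteration contraction (read, as you make explicit, as holding at every point of the sublevel set) with the one-step decrease bound obtained from \eqref{L_E2}, which is exactly the paper's argument specialized to $t=\tfrac{1}{L}$, yielding $\mu_p=\tfrac{2L^2(1-\gamma)}{2L-\mu}$ there versus your $\mu_p=\tfrac{1-c}{t(2-\mu t)}$. The only cosmetic differences are your use of a general step length $t$ and your spelled-out chaining of the one-step bound in the forward direction.
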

\begin{proof}
Let $\bar x\in \{x: f(x)\leq f(x^1)\}$. Linear convergence implies the existence of $\gamma\in [0, 1)$ with
\begin{align}\label{c_c_c}
f(\hat x)-f^\star \leq \gamma\left( f(\bar x)-f^\star\right),
\end{align}
where $\hat x=\bar x-\tfrac{1}{L}\nabla f(\bar x)$. By \eqref{L_E2}, we have $f(\bar x)- f(\hat x)\leq \tfrac{2L-\mu}{2L^2}\|\nabla f(\bar x)\|^2$. By using this inequality with \eqref{c_c_c}, we get
\begin{align*}
f(\bar x)-f^\star \leq \tfrac{1}{1-\gamma} \left( f(\bar x)-f(\hat x)\right)\leq \tfrac{2L-\mu}{2L^2(1-\gamma)}\|\nabla f(\bar x)\|^2,
\end{align*}
which shows that $f$ satisfies P\L\ inequality on $\{x: f(x)\leq f(x^1)\}$. The other implication follows from Theorem \ref{Th.PL}.
\end{proof}

\section{The P\L\ inequality: relation to some classes of functions}\label{quasar_sec}

In this section, we study some classes of functions for which Algorithm \ref{Alg1} may be linearly convergent.  We establish that these classes of functions satisfy the P\L\ inequality under mild assumptions, and we infer the linear convergence by using Theorem \ref{Th.PL}. Moreover, one can get convergence rates by applying performance estimation.

Throughout the section, we denote the optimal solution set of problem \eqref{P} by $X^\star$ and we assume that $X^\star$ is non-empty. We denote the distance function to $X^\star$ by $d_{X^\star}(x):=\inf_{y\in X^\star} \|y-x\|$. The set-valued mapping $\Pi_{X^\star} (x)$ stands for the projection of $x$  on $X^\star$, that is,
$\Pi_{X^\star} (x)=\{y: \|y-x\|=d_{X^\star}(x)\}$.

\begin{definition}
Let $\mu_g> 0$. A function $f$ has a quadratic gradient growth on $X\subseteq \mathbb{R}^n$  if
\begin{align}\label{Qq_r}
\langle \nabla f(x), x-x^\star\rangle \geq \mu_g d_{X^\star}^2 (x), \ \ \ \forall x\in X,
\end{align}
for some $x^\star\in \Pi_{X^\star}(x)$.
\end{definition}
Note that inequality \eqref{L_E1} implies that $\mu_g\leq L$. Hu et al. \cite{hu2021analysis} investigated the convergence rate $\{x^k\}$ when $f$ satisfies \eqref{Qq_r} and $X^\star$ is singleton. To the best knowledge of the authors, there is no convergence rate result in terms of $\{f(x^k)\}$  for functions with a quadratic gradient growth. The next proposition states that quadratic gradient growth property implies the P\L\  inequality.

\begin{proposition}\label{Pr.1}
Let $f\in\mathcal{F}_{\mu, L}(\mathbb{R}^n)$. If $f$ has a quadratic gradient growth on $X\subseteq \mathbb{R}^n$ with $\mu_g> 0$, then $f$ satisfies the P\L\ inequality with $ \mu_p=\tfrac{\mu_g^2}{L}$.
\end{proposition}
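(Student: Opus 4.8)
The plan is to chain two inequalities: the quadratic gradient growth bound, combined with Cauchy--Schwarz, will control the distance $d_{X^\star}(x)$ by the gradient norm, while the upper smoothness estimate \eqref{L_E1} will control the optimality gap $f(x)-f^\star$ by the squared distance. Composing the two yields the P\L\ inequality with the claimed constant $\mu_p=\tfrac{\mu_g^2}{L}$.

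First I would fix $x\in X$ and let $x^\star\in\Pi_{X^\star}(x)$ be a closest optimal point for which \eqref{Qq_r} holds, so that $\|x-x^\star\|=d_{X^\star}(x)$ and $f(x^\star)=f^\star$. Applying Cauchy--Schwarz to the inner product on the left of \eqref{Qq_r} gives
\[
\mu_g\, d_{X^\star}^2(x)\leq \langle \nabla f(x), x-x^\star\rangle \leq \|\nabla f(x)\|\, d_{X^\star}(x),
\]
and hence, dividing through by $d_{X^\star}(x)$ when it is positive,
\[
d_{X^\star}(x)\leq \tfrac{1}{\mu_g}\|\nabla f(x)\|.
\]

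Next I would exploit that $x^\star$ is a global minimizer of the differentiable function $f$, so $\nabla f(x^\star)=0$. Invoking \eqref{L_E1} with base point $x^\star$ and evaluation point $x$ then makes the linear term vanish and produces the quadratic growth estimate
\[
f(x)-f^\star\leq \tfrac{L}{2}\|x-x^\star\|^2=\tfrac{L}{2}\,d_{X^\star}^2(x).
\]
Combining the two displays gives $f(x)-f^\star\leq \tfrac{L}{2\mu_g^2}\|\nabla f(x)\|^2$, which is precisely \eqref{PL} with $\mu_p=\tfrac{\mu_g^2}{L}$.

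I do not expect a serious obstacle, since the argument is a direct composition of two elementary bounds. The only points needing care are the degenerate case $d_{X^\star}(x)=0$ (there $x=x^\star$, so both sides of the P\L\ inequality vanish and the claim is trivial, which also legitimizes the division above) and the justification that the smoothness estimate \eqref{L_E1} may be anchored at the optimal point $x^\star$ with $\nabla f(x^\star)=0$. Everything else reduces to routine algebra.
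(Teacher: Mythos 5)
Your proof is correct and follows essentially the same route as the paper's: Cauchy--Schwarz applied to the quadratic gradient growth inequality bounds $d_{X^\star}(x)$ by $\tfrac{1}{\mu_g}\|\nabla f(x)\|$, and the smoothness estimate \eqref{L_E1} anchored at $x^\star$ (where $\nabla f(x^\star)=0$) gives $f(x)-f^\star\leq \tfrac{L}{2}d_{X^\star}^2(x)$, which combine to yield the P\L\ inequality with $\mu_p=\tfrac{\mu_g^2}{L}$. Your additional care about the degenerate case $d_{X^\star}(x)=0$ and the vanishing gradient at $x^\star$ is a welcome tightening of details the paper leaves implicit.
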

\begin{proof}
Suppose that $x^\star\in \Pi_{X^\star}(x)$ satisfies \eqref{Qq_r}. By the Cauchy-Schwarz inequality, we have
\begin{align}\label{C-S}
\mu_g \|x-x^\star\|\leq \|\nabla f(x)\|.
\end{align}
On the other hand, \eqref{L_E1} implies that
\begin{align}\label{L-CC}
 f(x)\leq f(x^\star)+\tfrac{L}{2} \|x-x^\star\|^2.
\end{align}
The P\L\ inequality follows from \eqref{C-S} and \eqref{L-CC}.
\end{proof}
By Proposition \ref{Pr.1} and Theorem \ref{Th.PL}, one can infer the linear convergence of Algorithm \ref{Alg1} when $f$ has a quadratic gradient growth on $X=\{x: f(x)\leq f(x^1)\}$.
Indeed, one can derive the following bound if $t_1=\tfrac{1}{L}$ and $\mu=-L$,
\begin{align}\label{q_gr_bound1}
  f(x^2)-f^\star\leq\left(\frac{2L^2-2\mu_g^2}{2L^2+\mu_g^2}\right)\left(f(x^1)-f^\star\right).
\end{align}
Nevertheless, by using the performance estimation method, one can derive a better bound than the bound given by \eqref{q_gr_bound1}. The performance estimation problem for $t_1=\tfrac{1}{L}$ in this case may be formulated as
\begin{align}\label{QG_PEP}
\nonumber   \max & \ \frac{f^{2}-f^\star}{f^1-f^\star}\\
\nonumber \st \ & \{x^k, g^k, f^k\}\cup \{y^k, 0, f^\star\} \ \text{satisfy interpolation constraints \eqref{Int-c} for}\ k\in\{1, 2\}\\
&  \  x^{2}=x^1-\tfrac{1}{L} g^1  \\
\nonumber& \ f^k\geq f^\star  \ \  k\in\{1, 2\}\\
\nonumber  & \ \langle g^k, x^k-y^k\rangle \geq \mu_g \|y^k-x^k\|^2,  \ \  k\in\{1, 2\}\\
\nonumber  & \ \|x^1-y^1\|^2 \leq \|x^1-y^2\|^2\\
\nonumber  & \ \|x^2-y^2\|^2 \leq \|x^2-y^1\|^2.
\end{align}
Analogous to Section \ref{PL_sec}, one can obtain an upper bound for problem \eqref{QG_PEP} by solving a semidefinite program. Our numerical results show that the bounds generated by performance estimation is tighter than bound \eqref{q_gr_bound1}; see Figure \ref{fig1}. We do not have a closed-form bound on the optimal value of (21), though.
\begin{figure}
\begin{tikzpicture}
\begin{axis}[
    xmin = 0, xmax = 1,
    ymin = 0, ymax = 1,
    xtick distance = 0.5,
    ytick distance = 0.5,
    grid = both,
    minor tick num = 1,
    major grid style = {lightgray},
    minor grid style = {lightgray!50},
    width = \textwidth,
    height = 0.5\textwidth,
    xlabel = {$\frac{\mu_g}{L}$},
    ylabel = {Upper bound on function value},]

\addplot[
    domain = 0:1,
    samples = 1000,
    smooth,
    thick,
    blue,
] {(2-2*x*x)/(2+x*x)};

\addplot[
    smooth,
    thin,
    red,
    dashed
] table[col sep=comma]  {fig1.dat};
 \legend{
    Bound \eqref{q_gr_bound1},
    PEP bound \eqref{QG_PEP},
}

\end{axis}

\end{tikzpicture}
\caption{Convergence rate computed by performance estimation (red line)
and the bound given by \eqref{q_gr_bound1} (blue line)
for $\tfrac{\mu_g}{L}\in(0,1)$.}
\label{fig1}
\end{figure}
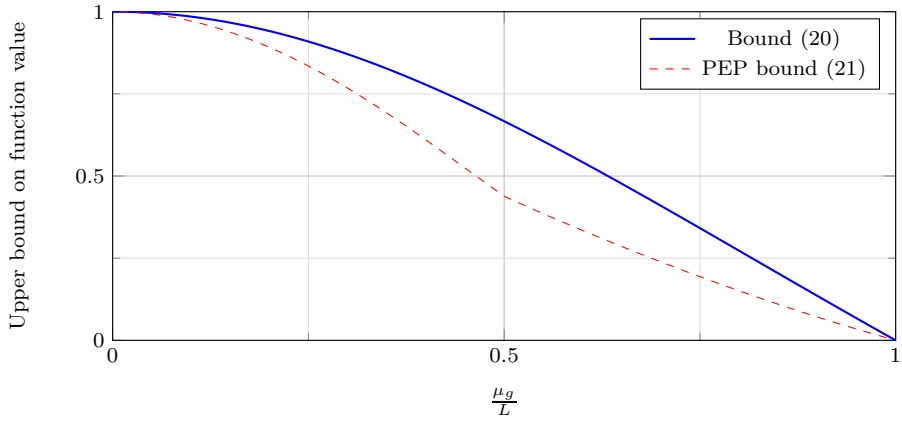

\begin{definition}{\cite[Definition 4]{necoara2019linear}, {\cite[Definition 4.1.2]{Nesterov}}}\label{def3}
Let $\mu_q> 0$. A function $f$ has a quadratic functional growth on $X\subseteq \mathbb{R}^n$ if
\begin{align}\label{QC_r}
\tfrac{\mu_q}{2}d^2_{X^\star}(x)  \leq f(x)-f^\star, \ \ \ \forall x\in X.
\end{align}
\end{definition}
It is readily seen that, contrary to the previous situations, the quadratic functional growth property does not necessarily imply that each stationary point is a global optimal solution. The next theorem investigates the relationship between quadratic functional growth property and other notions.

\begin{theorem}\label{Th.Q}
Let $f\in\mathcal{F}_{\mu, L}(\mathbb{R}^n)$ and let $X=\{x: f(x)\leq f(x^1)\}$. We have the following implications:
\begin{enumerate}[i)]
    \item
    \eqref{PL} $\Rightarrow$ \eqref{QC_r} with  $\mu_q=\mu_p$.
 \item
 If $\mu_q>\tfrac{-\mu L}{L-\mu}$, then  \eqref{QC_r} $\Rightarrow$ \eqref{Qq_r} with $\mu_g=\tfrac{\mu_q}{2}(1-\tfrac{\mu}{L})+\tfrac{\mu}{2}$.
  \item
 If
 $$
 f(x)-f(x^\star)\leq \langle \nabla f(x), x-x^\star\rangle, \ \forall x\in X,
 $$
for some $x^\star\in\Pi_{X^\star} (x)$ then  \eqref{QC_r} $\Rightarrow$ \eqref{Qq_r} with $\mu_g=\tfrac{\mu_q}{2}$.
\end{enumerate}
\end{theorem}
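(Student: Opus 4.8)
The statement splits into three implications. I would dispose of the two ``growth $\Rightarrow$ gradient-growth'' parts (iii) and (ii) first, since each reduces to a single substitution, and concentrate the real work on part (i).

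For part (iii), fix $x\in X$ and let $x^\star\in\Pi_{X^\star}(x)$ be the point supplied by the hypothesis, so that $f(x)-f^\star=f(x)-f(x^\star)\leq\langle\nabla f(x),x-x^\star\rangle$. Chaining this with the quadratic functional growth bound \eqref{QC_r}, namely $f(x)-f^\star\geq\tfrac{\mu_q}{2}d_{X^\star}^2(x)$, immediately gives $\langle\nabla f(x),x-x^\star\rangle\geq\tfrac{\mu_q}{2}d_{X^\star}^2(x)$, which is \eqref{Qq_r} with $\mu_g=\tfrac{\mu_q}{2}$; no curvature information enters. For part (ii), again fix $x\in X$ and $x^\star\in\Pi_{X^\star}(x)$, and note that $x^\star$ minimizes $f$, so $\nabla f(x^\star)=0$ and $f(x^\star)=f^\star$. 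The plan is to feed the pair $(x^\star,x)$ into the interpolation inequality \eqref{Int-c}, with $i$ carrying the data of $x^\star$ and $j$ the data of $x$. After setting $g^i=0$, multiplying through by $2(1-\tfrac{\mu}{L})>0$, and collecting the terms in $\langle\nabla f(x),x-x^\star\rangle$, the inequality rearranges to the sharp one-sided estimate
\begin{align*}
\langle\nabla f(x),x-x^\star\rangle\geq \tfrac{1}{2L}\|\nabla f(x)\|^2+\tfrac{\mu}{2}d_{X^\star}^2(x)+\left(1-\tfrac{\mu}{L}\right)\!\left(f(x)-f^\star\right).
\end{align*}
Discarding the nonnegative gradient term and inserting \eqref{QC_r} (legitimate since $1-\tfrac{\mu}{L}>0$) yields $\langle\nabla f(x),x-x^\star\rangle\geq\big[\tfrac{\mu}{2}+\tfrac{\mu_q}{2}(1-\tfrac{\mu}{L})\big]d_{X^\star}^2(x)$, i.e. \eqref{Qq_r} with the stated $\mu_g$. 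The hypothesis $\mu_q>\tfrac{-\mu L}{L-\mu}$ is precisely the condition $\mu_g>0$, so this is a genuine quadratic gradient growth estimate. The one mildly delicate point is selecting the ordering of $(i,j)$ in \eqref{Int-c} that produces a \emph{lower} bound on the inner product; the opposite ordering (which corresponds to convexity of $f-\tfrac{\mu}{2}\|\cdot\|^2$) yields only an upper bound and is useless here.

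Part (i), that \eqref{PL} implies \eqref{QC_r} with the \emph{same} constant $\mu_q=\mu_p$, is where the work lies, and I would argue it by a continuous gradient-flow estimate. Fix $x\in X$ and let $z(t)$ solve $\dot z=-\nabla f(z)$ with $z(0)=x$; the flow exists for all $t\geq0$ because $\nabla f$ is globally Lipschitz on $\mathcal{F}_{\mu,L}(\mathbb{R}^n)$. Since $\tfrac{d}{dt}f(z(t))=-\|\nabla f(z(t))\|^2\leq0$, the trajectory remains in $X$, so \eqref{PL} applies along it and gives $\tfrac{d}{dt}\big(f(z)-f^\star\big)\leq -2\mu_p\big(f(z)-f^\star\big)$, whence $f(z(t))-f^\star\to0$. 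Writing $\phi(t)=\sqrt{f(z(t))-f^\star}$ and using \eqref{PL} in the form $\|\nabla f(z)\|\geq\sqrt{2\mu_p}\,\phi$, one computes $\phi'(t)=-\tfrac{\|\nabla f(z)\|^2}{2\phi}\leq -\sqrt{\tfrac{\mu_p}{2}}\,\|\nabla f(z(t))\|=-\sqrt{\tfrac{\mu_p}{2}}\,\|\dot z(t)\|$. Integrating over $[0,\infty)$ bounds the arc length of the trajectory,
\begin{align*}
\int_0^\infty\|\dot z(t)\|\,dt\leq \sqrt{\tfrac{2}{\mu_p}}\,\phi(0)=\sqrt{\tfrac{2}{\mu_p}}\sqrt{f(x)-f^\star}.
\end{align*}
Finite length makes $z(\cdot)$ a Cauchy path, so it converges to some $\bar x$, and $f(\bar x)=f^\star$ forces $\bar x\in X^\star$; hence $d_{X^\star}(x)\leq\|x-\bar x\|\leq\int_0^\infty\|\dot z\|\,dt$, and squaring gives $\tfrac{\mu_p}{2}d_{X^\star}^2(x)\leq f(x)-f^\star$, which is \eqref{QC_r} with $\mu_q=\mu_p$.

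The main obstacle is entirely in part (i), namely recovering the \emph{sharp} constant $\mu_q=\mu_p$. A purely discrete argument --- telescoping the step lengths $\|x^{k+1}-x^k\|$ along Algorithm~\ref{Alg1} and combining the descent lemma with \eqref{PL} --- does go through, but the crude estimate $r_k+r_{k+1}\leq 2r_k$ for the residuals $r_k=\sqrt{f(x^k)-f^\star}$ loses a constant factor and delivers only $\mu_q=\tfrac{\mu_p}{8}$. Obtaining $\mu_q=\mu_p$ appears to require the continuous flow, whose convergence to a point of $X^\star$ must be justified via the finite-length property rather than assumed. Parts (ii) and (iii), by contrast, are essentially immediate once the correct specialization of \eqref{Int-c} and the growth bound \eqref{QC_r} are combined.
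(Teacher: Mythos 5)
Your proof is correct and follows essentially the same route as the paper: parts (ii) and (iii) use exactly the paper's specialization of the interpolation inequality \eqref{Int-c} at the pair $(x^\star,x)$ combined with \eqref{QC_r} (your rescaled form of the resulting inequality is the paper's line multiplied through by $1-\tfrac{\mu}{L}$), and your gradient-flow, finite-path-length argument for part (i) is precisely the proof of \cite[Theorem 2]{Karimi}, which the paper cites rather than reproduces. The only difference is presentational: you spell out the part (i) argument in full, whereas the paper delegates it to the reference.
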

\begin{proof}
One can establish $i)$ similarly to the proof of  \cite[Theorem 2]{Karimi}. Consider part $ii)$. Let $x\in X$ and $x^\star\in\Pi_{X^\star} (x)$ with $d_{X^\star} (x)=\|x-x^\star\|$.  By \eqref{Int-c}, we have
 $$
 f(x)-f(x^\star)\leq \tfrac{-1}{2(L-\mu)}\| \nabla f(x)\|^2-\tfrac{\mu L}{2(L-\mu)}\|x-x^\star\|^2+\tfrac{L}{L-\mu}\langle \nabla f(x), x-x^\star\rangle.
 $$
 As $\tfrac{\mu_q}{2}\|x-x^\star\|^2 \leq  f(x)-f(x^\star)$, we get
 $$
\left(\tfrac{\mu_q}{2}(1-\tfrac{\mu}{L})+\tfrac{\mu}{2}\right)\left\|x-x^\star\right\|^2 \leq  \langle\nabla f(x), x-x^\star\rangle,
 $$
 which establishes the desired inequality. Part $iii)$ is proved similarly to the former case.
\end{proof}
By Theorem \ref{Th.PL}, it is clear that Algorithm \ref{Alg1} enjoys linear convergence rate if $f$ has a quadratic gradient growth on $X=\{x: f(x)\leq f(x^1)\}$ and if $f$ satisfies assumptions $ii)$ or $iii)$ in Theorem \ref{Th.Q}. For instance, if $\mu=-L$ and $\mu_q\in(\tfrac{L}{2}, L)$,  one can derive the following convergence rate for Algorithm \ref{Alg1} for fixed step length $t_k=\tfrac{1}{L}$, $k\in\{1, ..., N\}$,
\begin{align}\label{CR_Qg}
f(x^{N+1})-f(x^1) \leq\left(\frac{2L^2-2(\mu_q-\tfrac{L}{2})^2}{2L^2+(\mu_q-\tfrac{L}{2})^2}\right)^N \left(f(x^1)-f^\star  \right).
\end{align}
It is interesting to compare the convergence rate \eqref{CR_Qg} to the convergence rate obtained by using the performance estimation framework. In this case, the performance estimation problem may be cast as follows,
\begin{align}\label{P_Qq}
\nonumber   \max & \ \frac{f^{N+1}-f^\star}{f^1-f^\star}\\
\nonumber \st \ & \{x^k, g^k, f^k\}\cup \{y^k, 0, f^\star\}  \ \text{satisfy inequality  \eqref{Int-c} for}\ k\in\{1, ..., N+1\} \\
&  \  x^{k+1}=x^k-\tfrac{1}{L} g^k, \ \  k\in\{1, ..., N\}   \\
\nonumber& \ f^k\geq f^\star  \ \  k\in\{1, ..., N\}\\
\nonumber  & \ f^k-f^\star \geq \tfrac{\mu_q}{2} \|x^k-y^k\|^2,  \ \  k\in\{1, ..., N+1\}\\
\nonumber  & \ \|x^k-y^k\|^2 \leq \|x^k-y^{k^\prime}\|^2,  \ \  k\in\{1, ..., N+1\}, k^\prime\in\{1, ..., N+1\}.
\end{align}
Since $x^{k+1}=x^k-\tfrac{1}{L} g^k$, we get $x^{k+1}=x^1-\tfrac{1}{L} \sum_{l=1}^k g^l$. Hence, problem \eqref{P_Qq} may be reformulated as follows,
\begin{align}\label{P2_Qq}
\nonumber   \max & \ \frac{f^{N+1}-f^\star}{f^1-f^\star}\\
\nonumber \st \ & \{x^1-\tfrac{1}{L} \sum_{l=1}^{k-1} g^l, g^k, f^k\}\cup \{y^k, 0, f^\star\}  \ \text{satisfy interpolation constraints \eqref{Int-c}}  \\
 & \ f^k\geq f^\star  \ \  k\in\{1, ..., N\}\\
\nonumber  & \ f^k-f^\star \geq \tfrac{\mu_q}{2} \|x^1-\tfrac{1}{L} \sum_{l=1}^{k-1} g^l-y^k\|^2,  \ \  k\in\{1, ..., N+1\}\\
\nonumber  & \ \| x^1-\tfrac{1}{L} \sum_{l=1}^{k-1} g^l-y^k\|^2 \leq \|x^1-\tfrac{1}{L} \sum_{l=1}^{k-1} g^l-y^{k^\prime}\|^2,  \ \  k, k^\prime\in\{1, ..., N+1\}.
\end{align}

The next theorem provides an upper bound for problem \eqref{P2_Qq} by using weak duality.

\begin{theorem}\label{Th.QF}
Let $f\in\mathcal{F}_{-L, L}(\mathbb{R}^n)$ and let $f$ have a quadratic functional growth on $X=\{x: f(x)\leq f(x^1)\}$ with $\mu_q\in(\tfrac{L}{2}, L)$. If $t_k=\tfrac{1}{L}$, $k\in\{1, ..., N\}$, then we have the following convergence rate for Algorithm \ref{Alg1},
\begin{align}\label{CR_Qg_pep}
f(x^{N+1})-f(x^1) \leq\tfrac{L}{\mu_q} \left( 2-\tfrac{2\mu_q}{L}\right)^N \left(f(x^1)-f^\star  \right).
\end{align}
\end{theorem}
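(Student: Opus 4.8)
The plan is to avoid assembling the full semidefinite dual of \eqref{P2_Qq} and instead run a potential-function argument whose underlying linear combination of constraints is precisely a weak-duality certificate for \eqref{P2_Qq}. The potential will be the squared distance to the solution set, $V_k := d_{X^\star}^2(x^k)$, and the target is the one-step contraction $V_{k+1}\le \rho\, V_k$ with $\rho := 2-\tfrac{2\mu_q}{L}$. The hypothesis $\mu_q\in(\tfrac{L}{2},L)$ is exactly the range for which $\rho\in(0,1)$, so this range is dictated by demanding a genuine contraction.

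For the one-step estimate I would fix $k$, pick $y^k\in\Pi_{X^\star}(x^k)$, and use $d_{X^\star}^2(x^{k+1})\le \|x^{k+1}-y^k\|^2$ (valid since $y^k\in X^\star$). Expanding with $x^{k+1}=x^k-\tfrac1L\nabla f(x^k)$ gives $\|x^{k+1}-y^k\|^2=\|x^k-y^k\|^2-\tfrac2L\langle \nabla f(x^k),x^k-y^k\rangle+\tfrac1{L^2}\|\nabla f(x^k)\|^2$. I would then lower-bound the inner product by applying the interpolation inequality \eqref{Int-c} (with $\mu=-L$) to the pair $(x^k,\nabla f(x^k),f^k)$ and the minimizer datum $(y^k,0,f^\star)$, which is one of the constraints already present in \eqref{P2_Qq}; summing that same inequality taken in both orderings yields the Lipschitz bound $\|\nabla f(x^k)\|^2\le L^2 d_{X^\star}^2(x^k)$. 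Feeding both into the expansion, and then absorbing the residual $f^k-f^\star$ via the quadratic functional growth inequality \eqref{QC_r} in the form $f^k-f^\star\ge \tfrac{\mu_q}{2}d_{X^\star}^2(x^k)$, I expect the inner-product and gradient-norm contributions to be eliminated and the whole expression to collapse to $d_{X^\star}^2(x^{k+1})\le \big(2-\tfrac{2\mu_q}{L}\big)d_{X^\star}^2(x^k)=\rho V_k$. Iterating gives $V_{N+1}\le \rho^N V_1$.

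Finally I would convert the iterate bound into the claimed function-value bound at the two endpoints. At $x^{N+1}$, applying the descent inequality \eqref{L_E1} with base point a projection $y^{N+1}\in\Pi_{X^\star}(x^{N+1})$ (where the gradient vanishes and the value equals $f^\star$) gives $f(x^{N+1})-f^\star\le \tfrac{L}{2}d_{X^\star}^2(x^{N+1})$. At $x^1$, the growth inequality \eqref{QC_r} gives $d_{X^\star}^2(x^1)\le \tfrac{2}{\mu_q}(f(x^1)-f^\star)$. Chaining these with $V_{N+1}\le \rho^N V_1$ yields $f(x^{N+1})-f^\star\le \tfrac{L}{2}\rho^N\cdot\tfrac{2}{\mu_q}(f(x^1)-f^\star)=\tfrac{L}{\mu_q}\big(2-\tfrac{2\mu_q}{L}\big)^N(f(x^1)-f^\star)$, which is \eqref{CR_Qg_pep} (reading the left-hand side as $f(x^{N+1})-f^\star$). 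The factor $\tfrac{L}{\mu_q}>1$ in front of $\rho^N$ is exactly the mismatch between the distance potential, which contracts cleanly, and the function-value gap, and is the reason the rate is not a pure $\rho^N$.

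The step I expect to be the main obstacle is the one-step contraction: obtaining precisely the constant $\rho=2-\tfrac{2\mu_q}{L}$ rather than a looser factor. This hinges on choosing the right weighting of the two interpolation inequalities, the Lipschitz relation, and the growth inequality so that the terms $\langle \nabla f(x^k),x^k-y^k\rangle$ and $\|\nabla f(x^k)\|^2$ are cancelled exactly; in the dual language these weights are the optimal multipliers of \eqref{P2_Qq}, which one pins down by solving the semidefinite dual by hand, as in the proof of Theorem \ref{Th.PL}. A secondary point to handle carefully is that the projection $y^k$ varies with $k$ and need not be the projection of $x^{k+1}$; this is harmless because the step $d_{X^\star}^2(x^{k+1})\le \|x^{k+1}-y^k\|^2$ uses only $y^k\in X^\star$.
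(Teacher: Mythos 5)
Your proof is correct, and it takes a genuinely different route from the paper's. The paper proves Theorem \ref{Th.QF} by exhibiting, in one shot, a weak-duality certificate for the $N$-step performance estimation problem \eqref{P2_Qq}: an explicit nonnegative combination of all the interpolation, growth and projection constraints, with geometric multipliers such as $2^{N+1-j}(1-\tfrac{\mu_q}{L})^{N-1}$ and $2^{N+1-j}(\tfrac{\mu_q}{L})(1-\tfrac{\mu_q}{L})^{N-i}$, which collapses to minus a sum of squares. Your argument replaces this by an iterated one-step contraction of the potential $d^2_{X^\star}(x^k)$, and it goes through exactly as you predict --- indeed more easily than you fear: writing $y^k\in\Pi_{X^\star}(x^k)$ and $g=\nabla f(x^k)$, the single interpolation inequality \eqref{Int-c} with $\mu=-L$ applied to the data $(y^k,0,f^\star)$ and $(x^k,g,f^k)$ gives $\langle g, x^k-y^k\rangle \ge 2(f^k-f^\star)+\tfrac{1}{2L}\|g\|^2-\tfrac{L}{2}\|x^k-y^k\|^2$, and substituting this into $\|x^{k+1}-y^k\|^2=\|x^k-y^k\|^2-\tfrac{2}{L}\langle g,x^k-y^k\rangle+\tfrac{1}{L^2}\|g\|^2$ cancels the gradient-norm terms automatically, so the Lipschitz bound $\|g\|^2\le L^2 d^2_{X^\star}(x^k)$ you planned to feed in is not even needed; the growth inequality \eqref{QC_r} then yields $d^2_{X^\star}(x^{k+1})\le\bigl(2-\tfrac{2\mu_q}{L}\bigr)d^2_{X^\star}(x^k)$, and your endpoint conversions ($f(x^{N+1})-f^\star\le\tfrac{L}{2}d^2_{X^\star}(x^{N+1})$ via \eqref{L_E1} at a projection of $x^{N+1}$, and $d^2_{X^\star}(x^1)\le\tfrac{2}{\mu_q}(f(x^1)-f^\star)$ via \eqref{QC_r}) reproduce the constant $\tfrac{L}{\mu_q}$ exactly; you are also right to read the left-hand side of \eqref{CR_Qg_pep} as $f(x^{N+1})-f^\star$. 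Two small points you should make explicit: \eqref{QC_r} is only assumed on $X=\{x: f(x)\le f(x^1)\}$, so you need every iterate to remain in $X$, which follows from the descent property $f(x^{k+1})\le f(x^k)-\tfrac{1}{2L}\|\nabla f(x^k)\|^2$ (a consequence of \eqref{L_E1}); and the gradient vanishes at every point of $X^\star$, which is what justifies the datum $(y^k,0,f^\star)$ in \eqref{Int-c}. As for what each approach buys: your per-step argument is elementary, checkable line by line, and proves the stronger statement that the squared distance to $X^\star$ itself contracts by $2-\tfrac{2\mu_q}{L}$ at every iteration, whereas the paper's aggregated certificate is what one obtains systematically by solving the SDP dual by hand, a machinery that remains applicable when iterating a one-step bound is not tight --- here the two happen to give the same rate, which is why your simpler argument recovers the paper's bound.
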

\begin{proof}
The proof is analogous to that of Theorem \ref{Th.PL}. Without loss of generality, we may assume that $f^\star=0$. By some algebra, one can show that
{\small{
\begin{align*}
    & {f^{N+1}-f^\star}-\tfrac{L}{\mu_q} \left( 2-\tfrac{2\mu_q}{L}\right)^N\left( {f^1-f^\star} \right)+\sum_{j=1}^{N+1} \left(2^{N+1-j}(1-\tfrac{\mu_q}{L})^{N-1}\right)\times\\
    &\left(f^\star-f^j-\langle g^j, y^1-x^1+\tfrac{1}{L} \sum_{l=1}^{j-1} g^l\rangle -\tfrac{1}{2L}\|g^j\|^2+\tfrac{L}{4} \| y^1-x^1+\tfrac{1}{L} \sum_{l=1}^{j-1} g^l+\tfrac{1}{L} g^j\|^2 \right)+\\
 &  \sum_{i=2}^{N}\sum_{j=i}^{N+1} \left(2^{N+1-j}(\tfrac{\mu_q}{L})(1-\tfrac{\mu_q}{L})^{N-i}\right)\bigg(f^\star-f^j-\langle g^j, y^i-x^1+\tfrac{1}{L} \sum_{l=1}^{j-1} g^l\rangle -\\
 &\tfrac{1}{2L}\|g^j\|^2+\tfrac{L}{4} \| y^i-x^1+\tfrac{1}{L} \sum_{l=1}^{j-1} g^l+\tfrac{1}{L} g^j\|^2 \bigg)+\sum_{j=2}^{N} \left(2^{N+1-j}(1-\tfrac{\mu_q}{L})^{N-j}\right)\times\\
 &\left(f^j-f^\star-\tfrac{\mu_q}{2} \| y^j-x^1+\tfrac{1}{L} \sum_{l=1}^{j-1} g^l\|^2 \right)+\left(2^{N}(1-\tfrac{\mu_q}{L})^{N-1}+\tfrac{L}{\mu_q} \left( 2-\tfrac{2\mu_q}{L}\right)^N\right)\times\\
 &\left(f^{1}-f^\star-\tfrac{\mu_q}{2} \| y^{1}-x^1\|^2 \right)=-\left(\tfrac{L}{4}(1-\tfrac{\mu_q}{L})^{N-1}\|y^1-x^1+\tfrac{1}{L}\sum_{l=1}^{N+1} g^l\|^2\right)-\\
 &\sum_{i=2}^N\left( \tfrac{\mu_q}{4}(1-\tfrac{\mu_q}{L})^{N-i}\|y^i-x^1+\tfrac{1}{L}\sum_{l=1}^{N+1} g^l\|^2\right)\leq 0.
\end{align*}
}}
 Due to the weak duality, we get
 $$
 f^{N+1}-f^\star\leq \tfrac{L}{\mu_q} \left( 2-\tfrac{2\mu_q}{L}\right)^N\left( {f^1-f^\star} \right),
 $$
 and the proof is complete.
\end{proof}

By doing some calculus, one can verify the following inequality
\begin{align*}
\frac{2L^2-2(\mu_q-\tfrac{L}{2})^2}{2L^2+(\mu_q-\tfrac{L}{2})^2}\geq (2-\tfrac{2\mu_q}{L}), \ \  \mu_q\in(\tfrac{L}{2}, L).
\end{align*}
Hence, Theorem \ref{Th.QF} provides a tighter bound than \eqref{CR_Qg}.

\begin{definition}{\cite[Definition 1]{hinder2020near}}\label{Def.Qs}
Let $\gamma\in (0, 1]$ and $\mu_s\geq 0$. A function $f$ is called $(\gamma, \mu_s)$-quasar-convex on $X\subseteq \mathbb{R}^n$ with respect to  $x^\star\in\text{argmin}_{x\in\mathbb{R}^n} f(x)$ if
\begin{align}\label{QSC}
f(x)+\tfrac{1}{\gamma}\langle\nabla f(x), x^\star-x\rangle+\tfrac{\mu_s}{2}\| x^\star-x\|^2  \leq f^\star, \ \ \ \forall x\in X.
\end{align}
\end{definition}
The class of quasar-convex functions is large. For instance,  non-negative homogeneous functions are $(1, 0)$-quasar-convex on $ \mathbb{R}^n$. Indeed, if $f$ is non-negative homogeneous of degree $k\geq 1$, by the Euler identity, we have
$$
f(x)+\langle\nabla f(x), x^\star-x\rangle=(1-k) f(x)\leq 0, \ \forall x\in\mathbb{R}^n,
$$
where $x^\star=0$. In what follows, we list some convergence result concerning quasar-convex functions for
Algorithm \ref{Alg1}.

\begin{theorem}{\cite[Remark 4.3]{bu2020note}}\label{Shah}
Let $f$ be $L$-smooth and let $f$ be $(\gamma, \mu_s)$-quasar-convex on $X=\{x: f(x)\leq f(x^1)\}$. If  $t_1=\tfrac{1}{L}$ and if $x^2$ is from Algorithm \ref{Alg1}, then
\begin{align}
f(x^2)-f^\star\leq \left(1-\tfrac{\gamma^2\mu_s}{L}\right)\left(f(x^1)-f^\star  \right).
\end{align}
\end{theorem}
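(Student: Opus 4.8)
The plan is to combine the descent inequality furnished by $L$-smoothness with a Polyak--\L ojasiewicz-type bound that I extract from quasar-convexity; the factor $1-\tfrac{\gamma^2\mu_s}{L}$ then falls out exactly as in the classical analysis behind Theorem \ref{T.PL.O}. Throughout I write $g=\nabla f(x^1)$ and let $x^\star$ be the minimizer with respect to which \eqref{QSC} holds.

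First I would apply the smoothness inequality \eqref{L_E1} with $x=x^1$ and $y=x^2=x^1-\tfrac{1}{L}g$. Since $y-x=-\tfrac1L g$, the linear and quadratic terms collapse to $-\tfrac1L\|g\|^2+\tfrac{1}{2L}\|g\|^2$, giving the standard one-step decrease
\[
f(x^2)\leq f(x^1)-\tfrac{1}{2L}\|g\|^2 .
\]
This is the only place the step length $t_1=\tfrac1L$ is used, and it reduces the theorem to showing a lower bound on $\|g\|^2$ in terms of $f(x^1)-f^\star$.

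The second step is to produce that lower bound, i.e.\ to show $f$ obeys the P\L\ inequality \eqref{PL} at $x^1$ with constant $\mu_p=\gamma^2\mu_s$. Evaluating \eqref{QSC} at $x=x^1$ and multiplying by $\gamma>0$, I would rearrange to
\[
\gamma\bigl(f(x^1)-f^\star\bigr)\leq \langle g,\,x^1-x^\star\rangle-\tfrac{\gamma\mu_s}{2}\|x^1-x^\star\|^2 .
\]
Writing $a=\|g\|$ and $b=\|x^1-x^\star\|$, Cauchy--Schwarz bounds the inner product by $ab$, and Young's inequality $ab\leq \tfrac{a^2}{2\gamma\mu_s}+\tfrac{\gamma\mu_s}{2}b^2$ cancels the negative quadratic term, leaving $\gamma(f(x^1)-f^\star)\leq \tfrac{\|g\|^2}{2\gamma\mu_s}$, that is, $\|g\|^2\geq 2\gamma^2\mu_s\bigl(f(x^1)-f^\star\bigr)$. (Alternatively, this P\L\ bound is exactly the content of Theorem \ref{rel2}, which I could invoke instead of rederiving it; the $\mu_s=0$ case is trivial since the claimed rate is then $1$.)

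Finally I would substitute this lower bound into the descent inequality:
\[
f(x^2)-f^\star\leq \bigl(f(x^1)-f^\star\bigr)-\tfrac{1}{2L}\cdot 2\gamma^2\mu_s\bigl(f(x^1)-f^\star\bigr)=\Bigl(1-\tfrac{\gamma^2\mu_s}{L}\Bigr)\bigl(f(x^1)-f^\star\bigr),
\]
which is the assertion. I do not anticipate a serious obstacle here: the one genuinely delicate point is the passage from quasar-convexity to the P\L\ inequality, where one must correctly use Young's inequality (rather than plain Cauchy--Schwarz) so that the $-\tfrac{\gamma\mu_s}{2}b^2$ term is absorbed and the optimal constant $\gamma^2\mu_s$ is recovered; everything else is routine bookkeeping.
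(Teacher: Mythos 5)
Your proposal is correct, and it is worth noting that the paper itself gives no proof of Theorem \ref{Shah} at all: the result is simply imported from \cite[Remark 4.3]{bu2020note}, so yours is a genuine, self-contained verification rather than a variant of an in-paper argument. Each of your three steps checks out: the descent inequality $f(x^2)\leq f(x^1)-\tfrac{1}{2L}\|\nabla f(x^1)\|^2$ follows from \eqref{L_E1} with $t_1=\tfrac1L$; the passage from \eqref{QSC} at $x=x^1$ (legitimate, since $x^1\in X$) via Cauchy--Schwarz plus Young's inequality with parameter $\gamma\mu_s$ correctly yields $\|\nabla f(x^1)\|^2\geq 2\gamma^2\mu_s\left(f(x^1)-f^\star\right)$, which is exactly part $iii)$ of Theorem \ref{rel2} (a part the paper itself does not prove but defers to \cite[Lemma 3.2]{bu2020note}); and the substitution and the trivial $\mu_s=0$ case are handled properly. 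The one comparison worth making: the reason the paper states Theorem \ref{Shah} is to contrast it with what its own machinery gives --- combining Theorem \ref{rel2}$\,iii)$ with the performance-estimation bound of Theorem \ref{Th.PL} (rather than with the plain descent lemma, as you do) yields the strictly sharper rate
\begin{align*}
f(x^2)-f^\star\leq \left(\frac{2L-2\mu_s\gamma^2}{2L+\mu_s\gamma^2}\right)\left(f(x^1)-f^\star\right)\leq \left(1-\tfrac{\gamma^2\mu_s}{L}\right)\left(f(x^1)-f^\star\right).
\end{align*}
So your argument buys a short, elementary, fully explicit proof of the stated constant (essentially the classical P\L\ analysis behind Theorem \ref{T.PL.O}), while the paper's route through Theorem \ref{Th.PL} buys a tighter constant at the cost of the semidefinite-programming-based multiplier certificate.
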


In the following theorem, we state the relationship between quasar-convexity and other concepts. Before we get to the theorem, we recall star convexity. A set $X$ is called star convex at $x^\star$ if
$$
\lambda x+(1-\lambda)x^\star \in X, \ \forall x\in X, \forall\lambda\in [0, 1].
$$
\begin{theorem}\label{rel2}
Let $x^\star$ be the unique solution of problem \eqref{P} and let $X=\{x: f(x)\leq f(x^1)\}$. If $X$  is star convex at $x^\star$, then we have the following implications:
\begin{enumerate}[i)]
\item
    \eqref{QSC} $\Rightarrow$ \eqref{Qq_r} with  $ \mu_g=\tfrac{\mu_s\gamma}{2}+\tfrac{\mu_s\gamma^2}{4}$.
 \item
    \eqref{Qq_r} $\Rightarrow$ \eqref{QSC} with $\mu_s=\ell-\tfrac{L}{2}$ and $\gamma=\tfrac{\mu_g}{\ell}$ for each $\ell\in (\max(\tfrac{L}{2}, \mu_g), \infty)$.
    \item
    \eqref{QSC} $\Rightarrow$ \eqref{PL} with  $ \mu_p=\mu_s\gamma^2$.
\end{enumerate}
\end{theorem}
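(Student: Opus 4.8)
The plan is to treat the three implications separately, deriving each from the defining inequalities \eqref{QSC}, \eqref{Qq_r} and \eqref{PL}. Since $x^\star$ is the unique minimizer we have $X^\star=\{x^\star\}$, hence $d_{X^\star}(x)=\|x-x^\star\|$ and $\nabla f(x^\star)=0$ throughout. Implications $iii)$ and $ii)$ are essentially one-step estimates, whereas $i)$ is the one that genuinely uses star convexity and will require an integration along the segment joining $x^\star$ to $x$.

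For $iii)$, I would rewrite \eqref{QSC} as $f(x)-f^\star\leq \tfrac1\gamma\langle\nabla f(x),x-x^\star\rangle-\tfrac{\mu_s}{2}\|x-x^\star\|^2$, bound the inner product by Cauchy--Schwarz, and then maximize the scalar expression $\tfrac1\gamma\|\nabla f(x)\|\,\|x-x^\star\|-\tfrac{\mu_s}{2}\|x-x^\star\|^2$ over $\|x-x^\star\|$ (equivalently, complete the square). Its maximum equals $\tfrac{1}{2\mu_s\gamma^2}\|\nabla f(x)\|^2$, which is exactly \eqref{PL} with $\mu_p=\mu_s\gamma^2$; this step needs neither star convexity nor uniqueness. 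For $ii)$, I would combine two facts valid for every $x\in X$: the smoothness bound \eqref{L_E1} applied at the base point $x^\star$ (using $\nabla f(x^\star)=0$) gives $f(x)-f^\star\leq \tfrac{L}{2}\|x-x^\star\|^2$, while \eqref{Qq_r} gives $\langle\nabla f(x),x-x^\star\rangle\geq \mu_g\|x-x^\star\|^2$. The inequality to be proved, namely $\tfrac1\gamma\langle\nabla f(x),x-x^\star\rangle\geq (f(x)-f^\star)+\tfrac{\mu_s}{2}\|x-x^\star\|^2$, then follows as soon as $\tfrac{\mu_g}{\gamma}\geq \tfrac{L+\mu_s}{2}$; substituting $\gamma=\tfrac{\mu_g}{\ell}$ and $\mu_s=\ell-\tfrac{L}{2}$ turns the left side into $\ell$ and reduces the requirement to $\ell\geq \tfrac{L}{2}$, which holds on the stated range of $\ell$.

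The substantive part is $i)$. Rearranging \eqref{QSC} at $x$ gives $\langle\nabla f(x),x-x^\star\rangle\geq \gamma\bigl(f(x)-f^\star\bigr)+\tfrac{\gamma\mu_s}{2}\|x-x^\star\|^2$, so to reach the target constant $\mu_g=\tfrac{\mu_s\gamma}{2}+\tfrac{\mu_s\gamma^2}{4}$ it suffices to establish the quadratic functional growth $f(x)-f^\star\geq \tfrac{\mu_s\gamma}{4}\|x-x^\star\|^2$ and substitute. To obtain this growth I would set $\psi(\lambda):=f\bigl(x^\star+\lambda(x-x^\star)\bigr)-f^\star$ for $\lambda\in[0,1]$; star convexity guarantees the whole segment lies in $X$, so \eqref{QSC} applies at each $x_\lambda=x^\star+\lambda(x-x^\star)$, yielding the differential inequality $\tfrac{\lambda}{\gamma}\psi'(\lambda)-\psi(\lambda)\geq \tfrac{\mu_s\lambda^2}{2}\|x-x^\star\|^2$. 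Multiplying by the integrating factor $\gamma\lambda^{-\gamma-1}$ turns the left side into $\tfrac{d}{d\lambda}\bigl[\lambda^{-\gamma}\psi(\lambda)\bigr]$, and integrating from $0$ to $1$ gives $f(x)-f^\star\geq \tfrac{\gamma\mu_s}{2(2-\gamma)}\|x-x^\star\|^2\geq \tfrac{\gamma\mu_s}{4}\|x-x^\star\|^2$, the last inequality using $2-\gamma\leq 2$.

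The main obstacle is this final integration: one must justify that the boundary term $\lambda^{-\gamma}\psi(\lambda)$ vanishes as $\lambda\to0^+$. This follows from $\psi(0)=0$ and $\psi'(0)=\langle\nabla f(x^\star),x-x^\star\rangle=0$ together with $\gamma\in(0,1]$, so that $\lambda^{-\gamma}\psi(\lambda)=o(\lambda^{1-\gamma})\to0$. I note that chaining $iii)$ with Theorem~\ref{Th.Q}$i)$ would also give some functional growth for $i)$, but with constant $\mu_q=\mu_s\gamma^2$, which does not reproduce the stated $\mu_g$; the segment argument is precisely what yields the crude but clean growth constant $\tfrac{\mu_s\gamma}{2}$ and hence the exact value claimed.
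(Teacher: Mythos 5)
Your proposal is correct, and its overall skeleton matches the paper's proof: for part i) both arguments integrate \eqref{QSC} along the segment $x_\lambda=x^\star+\lambda(x-x^\star)$ (which is exactly where star convexity of $X$ is used) to obtain the quadratic growth $f(x)-f^\star\geq\tfrac{\gamma\mu_s}{4}\|x-x^\star\|^2$, and then add this to the rearranged \eqref{QSC} to produce $\mu_g=\tfrac{\mu_s\gamma}{2}+\tfrac{\mu_s\gamma^2}{4}$; for part ii) both combine the smoothness bound $f(x)-f^\star\leq\tfrac{L}{2}\|x-x^\star\|^2$ (from \eqref{L_E1} with $\nabla f(x^\star)=0$) with \eqref{Qq_r}. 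The differences are in execution. In part i), the paper handles the integral by simply discarding the nonnegative term $f(x_\lambda)-f^\star$ under the integral (global optimality of $x^\star$), which yields the constant $\tfrac{\gamma\mu_s}{4}$ directly; your integrating-factor computation retains that term and produces the slightly sharper intermediate constant $\tfrac{\gamma\mu_s}{2(2-\gamma)}$ before relaxing it, at the cost of a boundary-term analysis at $\lambda\to 0^+$. You justify that boundary term correctly via $\psi(0)=0$, $\psi'(0)=0$; note that nonnegativity of $\psi$ alone already suffices, since $-\lambda^{-\gamma}\psi(\lambda)\leq 0$ enters with a favorable sign when integrating over $[\epsilon,1]$ and letting $\epsilon\to 0^+$. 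In part iii), the paper gives no argument at all but defers to Lemma 3.2 of \cite{bu2020note}; your Cauchy--Schwarz plus completing-the-square derivation of \eqref{PL} with $\mu_p=\mu_s\gamma^2$ is a correct, self-contained replacement (and, as you note, needs neither uniqueness of $x^\star$ nor star convexity). So your proof buys a marginally sharper growth estimate in i) and removes an external citation in iii), while the paper's version of i) is shorter because dropping the nonnegative integrand avoids the ODE-style bookkeeping.
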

\begin{proof}
The proof of $i)$ is similar in spirit to the proof of Theorem 1 in \cite{necoara2019linear}. Let $x\in X$. By  the fundamental theorem of calculus and \eqref{QSC},we have
\begin{align*}
f(x)-f(x^\star)&=\int_0^1 \tfrac{1}{\lambda}\langle \nabla f(\lambda x+(1-\lambda)x^\star), \lambda x+(1-\lambda)x^\star-x^\star\rangle d\lambda\\
&\geq \int_0^1 \tfrac{\gamma}{\lambda}\left( f(\lambda x+(1-\lambda)x^\star)-f(x^\star)+\tfrac{\mu_s\lambda^2}{2} \|x-x^\star\|^2\right)d\lambda\\
& \geq \tfrac{\gamma\mu_s}{4}\|x-x^\star\|^2,
\end{align*}
where the last inequality follows from the global optimality of $x^\star$. By summing $f(x)-f(x^\star)\geq \tfrac{\gamma\mu_s}{4}\|x-x^\star\|^2$ and \eqref{QSC}, we get the desired inequality.
Now, we prove part $ii)$. Let $x\in\mathbb{R}^n$ and $\ell\in (\max(\tfrac{L}{2}, \mu_g), \infty)$. By \eqref{L_E1}, we have
\begin{align}\label{L-C}
 f(x)\leq f(x^\star)+\tfrac{L}{2} \|x-x^\star\|^2.
\end{align}
By using \eqref{L-C} and \eqref{Qq_r}, we get
$$
f(x)+(\tfrac{\ell}{\mu_g})\langle \nabla f(x), x^\star-x\rangle+(\ell-\tfrac{L}{2})\|x-x^\star\|^2 \leq f(x^\star).
$$
For the proof of $iii)$, we refer the reader to \cite[Lemma 3.2]{bu2020note}.
\end{proof}

By combining Theorem \ref{Th.PL} and Theorem \ref{rel2}, under the assumptions of Theorem \ref{Shah}, one can get the following convergence rate for Algorithm \ref{Alg1} with $t_1=\tfrac{1}{L}$,
$$
f(x^2)-f^\star\leq \left(\frac{2L-2\mu_s\gamma^2}{2L+\mu_s\gamma^2}\right)\left(f(x^1)-f^\star\right),
$$
which is tighter the bound given in Theorem \ref{Shah}.

\section{Concluding remarks}
This paper studied the convergence rate of the gradient method with fixed step lengths for smooth functions satisfying the  P\L\ inequality. We gave a new convergence rate, which is sharper than known bounds in the literature.  One important question which remains to be addressed is  the computation of the tightest bound for Algorithm \ref{Alg1}. Moreover, the performance analysis of fast gradient methods, like Algorithm \ref{Alg2}, for these classes of functions may also be of interest.

\bibliographystyle{spmpsci}      
\bibliography{references}

\end{document}